\documentclass[12pt]{amsart}
\usepackage[T1]{fontenc}
\usepackage{lmodern}
\usepackage{amsmath,amssymb,amsthm}
\usepackage[margin=1in]{geometry}
\usepackage{microtype}
\usepackage{hyperref}
\hypersetup{hidelinks}
\newcommand{\SG}{\mathcal{SG}}
\newcommand{\crk}{\operatorname{cr}}
\newcommand{\CE}{\operatorname{CE}}
\newcommand{\tarrow}{\Longrightarrow}
\newcommand{\Z}{\mathbb Z}

\newcommand{\Nzero}{\mathbb N_0}
\newtheorem{theorem}{Theorem}[section]
\newtheorem{lemma}[theorem]{Lemma}
\newtheorem{proposition}[theorem]{Proposition}
\newtheorem{corollary}[theorem]{Corollary}
\theoremstyle{definition}
\newtheorem{definition}[theorem]{Definition}
\theoremstyle{remark}
\newtheorem{remark}[theorem]{Remark}
\numberwithin{equation}{section}

\usepackage{tikz}
\usepackage{tikz-cd}
\usetikzlibrary{decorations.pathreplacing,arrows.meta,calc}

\begin{document}
\title{4-manifold topology and collision rank of groups}
\author{Vyacheslav Krushkal}
\address{Department of Mathematics, University of Virginia, Charlottesville, VA 22904}
\email{krushkal\char 64 virginia.edu}
\begin{abstract}
We introduce collision rank and show that groups of finite collision 
rank are good in the sense of topological $4$-manifold theory. Finitely generated groups of collision rank two are shown to exist outside the
class of subexponentially amenable groups. Such examples are given by commutator
subgroups of topological full groups of Sturmian systems, and include continuum many pairwise nonisomorphic infinite simple amenable groups.
\end{abstract}
\maketitle

\section{Introduction}

The disk embedding theorem is a key result underlying the theory of topological $4$-manifolds. It is used in the proofs of the $4$-dimensional surgery exact sequence and of the $5$-dimensional $s$-cobordism theorem in the topological category. Freedman proved the disk embedding theorem in the simply connected case in his groundbreaking work \cite{F0}. Groups for which the disk embedding theorem holds are called good. It was shown in \cite{F1} that elementary amenable groups are good. The class of known good groups was extended to include subexponentially amenable groups, denoted ${\SG}$, in \cite{FT,KQ}. Here $\SG$ is the smallest class of groups that contains all finitely generated groups of subexponential growth and is closed under extensions, direct limits, subgroups, and quotients. It was conjectured in \cite{F1} that nonabelian free groups are not good; this conjecture remains open, see \cite{FK} for a detailed discussion.

The main result of this paper is a construction of good groups outside the class $\SG$. It follows from the next two theorems.

\begin{theorem}\label{thm:main}
Groups of finite collision rank are good.
\end{theorem}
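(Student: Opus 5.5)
We have to guess what collision rank is. The macros hint at it: \crk is the collision rank, \CE is presumably a "collision event" or some kind of extension, and \tarrow suggests an implication. I expect the definition to be an iterated notion: a group has collision rank $0$ if it is trivial or of subexponential growth, and rank $\le k$ if every finite collection of elements, or of words arising in a null-homotopy, can be "collided" into a subgroup of smaller rank after a bounded length increase. The standard route to goodness is Freedman's criterion. A group $G$ is good if, for every relevant immersed disk configuration, the $\pi_1$-null capped gropes or towers can be found. By Freedman--Teichner and Krushkal--Quinn, it suffices to show the following: for each finitely generated subgroup $H$ and each $\pi_1$-image of a grope, certain nested commutator words become trivial. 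Concretely, the \emph{$\pi_1$-null property} needs to be checked for the image of $\pi_1$ of a suitable capped grope.

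My plan has three steps. First, I will reduce goodness to the following algebraic statement. For every $n$ there is a height $h=h(n)$ such that, for any homomorphism from the free group on the generators of a height-$h$ grope's $\pi_1$ into $G$ that kills the caps, some sub-grope of height $n$ has trivial image. This is the form used in \cite{FT,KQ}, namely the ``grope lemma'' plus the Freedman--Teichner argument for subexponential growth. Second, I will prove this statement by induction on collision rank. The base case is subexponential growth, treated by the counting argument of Freedman--Teichner. There, the $2^h$ branches of a grope produce exponentially many distinct elements in a ball of linearly growing radius, which forces two of them to coincide. That coincidence is a ``collision,'' and it lets one pass to a sub-grope with null image. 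For the inductive step, I expect the definition of collision rank $k$ to say that exponentially many such words cannot all be distinct modulo a subgroup or quotient of rank $k-1$. A collision then produces a sub-grope whose image lies in a rank-$(k-1)$ subgroup, where the inductive hypothesis applies. Third, finiteness of the rank guarantees that the induction terminates. Since gropes can be taken of arbitrary height, the required heights compose as $h_k=h_{k-1}\circ(\text{linear})$, so they remain finite.

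The main obstacle is the inductive step. A collision in the quotient or extension only gives that two grope elements agree up to an element of the lower-rank piece, not that they are equal. Turning this into a genuinely $\pi_1$-null sub-grope requires a surgery or tower-refinement argument: one splits a grope stage into a piece mapping into the smaller-rank subgroup and applies induction there. This is where the closure-under-extensions argument of \cite{KQ} is needed. I would first try to mimic that extension argument directly, replacing ``normal subgroup/quotient'' by the pair given in the definition of collision rank, and check that the height bookkeeping stays finite.
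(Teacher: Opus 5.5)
Your proposal has a genuine gap, and it begins with the guessed definition. In the paper, $\crk(G)\le r$ means that every label set $A_0$ (finite, symmetric, containing $1$) admits a chain of \emph{finite label sets} $A_0\tarrow A_1\tarrow\cdots\tarrow A_r=\{1\}$. Here $A\tarrow B$ means that for some $h$, every $A$-labeling of the rooted trivalent tree $T_h$ has two incomparable vertices $v,w$ with $p(v)p(w)^{-1}\in B$. Rank $0$ means trivial, and subexponential groups already have rank at most one. No subgroup or quotient of smaller rank appears anywhere. So your inductive step (collide into a lower-rank subgroup, then run the extension argument of \cite{KQ}) has nothing to act on. It also could not work for the intended examples: $G_\alpha$ is simple, and the locally finite subgroup $H_x$ that produces the first collision is not normal, so there is no extension to exploit. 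Your step-one target (``some sub-grope of height $n$ has trivial image'') is also not what is needed. One must produce an immersed disk with the same framed attaching region whose double point loops map trivially to $\pi$.

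The missing idea is a geometric lemma that realizes one stage $A\tarrow B$ directly. If a properly immersed disk-like capped grope of height $k+2$ has all double point group elements in $A$, then its thickening contains one of height $k$ with the same attaching region and all double point group elements in $B$. The proof proceeds as follows.
\begin{enumerate}
\item Split the top two stages into branches with uniform $2h$-types and no geometric collisions through distance $2h$ \cite[Sections 2.5--2.6]{KQ}.
\item Read off intersection trees of height $h$ from cap intersections between branches.
\item Label these trees by inverses, so that $p(v)p(w)^{-1}$ is exactly the product along the geodesic from $v$ to $w$.
\item Incomparability makes this path enter and leave each intermediate branch through different caps, and uniform types make the labels match.
\item Branch elimination \cite[Section 4.2]{KQ} along the path creates base--base intersections labeled \emph{literally} by that product (fixed base arcs, not up to conjugacy), hence in $B$.
\end{enumerate}
Your Freedman--Teichner style base case is essentially the case $B=\{1\}$. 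Allowing an arbitrary finite $B$ dissolves the obstacle you anticipate: differences only need to lie in $B$, and the $B$-labeled double points are simply the input to the next stage. Finally, $h$ is the height of an intersection tree obtained by splitting, not a grope height. So the bookkeeping is just: raise the grope to height $2r$ and apply the lemma $r$ times, each application using two stages.
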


Collision rank, denoted $\crk(G)$, is defined in
Section~\ref{sec: collision}. Roughly speaking, it measures the number of successive
collision steps needed to reduce finite sets of group elements to the
identity. Groups of collision rank at most one are precisely the supramenable groups.

\begin{theorem}\label{thm:main-intro}
Let $\alpha=[0;a_1,a_2,\ldots]$ be an irrational number with unbounded continued fraction coefficients,
let $(X_\alpha,T_{\alpha})$ be the
Sturmian subshift of slope $\alpha$, and let
$G_\alpha=[[T_\alpha]]'$ be the commutator subgroup of its topological
full group. Then $G_\alpha$ is finitely generated, infinite, simple, and
amenable. Moreover,
$ G_\alpha\notin\SG$ and $\crk(G_\alpha)=2.$
\end{theorem}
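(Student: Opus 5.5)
The theorem bundles several claims, and I would treat them separately. Most rest on known results about topological full groups of minimal subshifts; the new content is the collision-rank computation and the placement outside $\SG$.

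\emph{Standard properties.} The Sturmian subshift $(X_\alpha,T_\alpha)$ is a minimal subshift on a Cantor set. By Matui's theorem, the commutator subgroup $[[T]]'$ of the topological full group of a minimal subshift is simple and finitely generated. By the Juschenko--Monod theorem, $[[T]]$ is amenable for any minimal Cantor homeomorphism, and hence so is the subgroup $G_\alpha$. $G_\alpha$ is infinite because $[[T]]'$ contains, for example, elements supported on arbitrarily small clopen sets.

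\emph{Not in $\SG$.} Here I would use the classical structural fact that every group in $\SG$ built up via the closure operations from groups of subexponential growth has the following property: either it is subexponentially amenable in a way detected on finitely generated subgroups, or, for simple groups, it must itself have subexponential growth. More precisely, a finitely generated simple group in $\SG$ must have subexponential growth. This follows from the inductive characterization of $\SG$ by ordinals (as for elementary amenable groups): a simple group admits no nontrivial extension decomposition. A quotient of a finitely generated group of subexponential growth has subexponential growth, and a finitely generated group that is a direct limit is already a member at an earlier stage. So I reduce to showing that $G_\alpha$ has exponential growth.

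For that I would exhibit a free subsemigroup, or use Matte Bon's results on growth of $[[T]]'$ for subshifts with low complexity. The unbounded continued fraction coefficients are what produce long repeated blocks $w^{a_n}$ in the Sturmian words. These let me build two elements acting like independent "ping-pong" translations on disjoint long cylinder sets. The condition $\alpha$ with unbounded $a_n$ enters exactly here.

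\emph{Collision rank two.} The upper bound $\crk\le 2$ I would get by combining the definition with a two-step reduction. First, collide finite sets using the $\Z$-orbit structure: $G_\alpha$ acts on orbits as a group of bounded-displacement permutations of $\Z$. The cocycle $n_g(x)$ with $g(x)=T^{n_g(x)}x$ then lets a first collision step map elements into the subgroup fixing long cylinders. Second, that subgroup is locally of subexponential (indeed polynomial) growth, because Sturmian complexity is $n+1$; hence it is supramenable, giving one more collision step. The lower bound $\crk\ge 2$ amounts to $G_\alpha$ not being supramenable, i.e.\ rank at most one fails. This follows from exponential growth via a free subsemigroup, since groups containing free subsemigroups admit paradoxical subsets, contradicting supramenability.

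I expect the exponential growth and free-subsemigroup step, which depends on unbounded $a_n$, to be the main obstacle.
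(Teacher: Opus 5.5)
\textbf{Gap: the upper bound $\crk(G_\alpha)\le 2$.} This bound is the only new content of the theorem, and your sketch does not establish it.

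A stage $A\tarrow B$ has to be forced by pigeonhole: for some $h$, every $A$-labeling of $T_h$ must contain incomparable $v,w$ with $p(v)p(w)^{-1}\in B$. Since $G_\alpha$ has exponential growth, $A^h$ itself is too large. You therefore need a subgroup $H$ such that:
\begin{enumerate}
\item the number of cosets $Hg$, $g\in A^h$, is below $3\cdot 2^{h-1}$;
\item $\langle H\cap A^h(A^h)^{-1}\rangle$ collapses in one more stage.
\end{enumerate}
Your sketch supplies neither. The phrase ``the cocycle lets a first collision step map elements into the subgroup fixing long cylinders'' contains no coset count at all.

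The proposed subgroup also fails the property your second stage relies on. The pointwise fixator of a clopen cylinder $U$ contains the topological full group of the induced system on $X\setminus U$, extended by the identity on $U$. That induced system is again a minimal subshift, so by Matui the fixator contains $C_2\wr\Z$. Hence it has exponential growth, and Sturmian complexity $n+1$ does not make it locally subexponential.

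Notice also that your rank argument never uses the unbounded partial quotients. If it worked, it would give $\crk\le 2$ for every Sturmian slope, which is beyond what the paper's method reaches.

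\textbf{The missing idea} is a periodic-block coset count for the half-line stabilizer $H_x=\{g:g(P_x)=P_x\}$, which is locally finite.
\begin{enumerate}
\item By Lemma~\ref{lem:defect-dictionary}, the coset $H_xg$ is determined by $(\pi_g^x)^{-1}(\mathbb N_0)$.
\item Unbounded $a_n$ put the powers $s_n^{a_{n+1}}$ in the language. So $x$ can be chosen $p$-periodic on $[-2Dh-\rho,2Dh+\rho]$.
\item On $[-Dh,Dh]$, $\pi_g^x$ then acts as a translation on each residue class mod $p$. So the coset is determined by $p$ integers, each taking at most $2Dh/p+2$ values.
\item With $h=Kp$ and $2DK+2<2^K$, there are at most $(2DK+2)^p<2^h$ cosets. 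Lemma~\ref{lem:coset-criterion} then gives $A\tarrow H_x\cap A^h(A^h)^{-1}\tarrow\{1\}$.
\end{enumerate}
This is exactly where the hypothesis on $\alpha$ enters.

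\textbf{The step you flagged as the main obstacle is only a citation.} It holds for every infinite minimal subshift: $[[T]]'$ contains $C_2\wr\Z$ by Matui. That single fact gives infinitude, a free submonoid of rank three (so $\crk\ge 2$), and exponential growth, whence $G_\alpha\notin\SG$ \cite{GM}. No ping-pong on long blocks is needed. Your reduction of $G_\alpha\notin\SG$ to exponential growth for finitely generated simple groups is fine, as is your lower bound via non-supramenability.
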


This theorem gives continuum many pairwise nonisomorphic groups; 
for example, it applies to $\alpha=[0;2,3,4,\ldots]$ 
In particular, this gives the first instances of finitely generated simple good groups of exponential growth.

Sturmian subshifts arise from encoding irrational rotations of the circle;
modern theory of their underlying symbolic sequences in symbolic dynamics was initiated by Morse and Hedlund in \cite{MH}.
The study of full groups originates in Dye’s work on orbit equivalence in measurable dynamics \cite{Dye}. 
Topological full groups have long been studied in Cantor dynamics \cite{GPS}, and their commutator subgroups, provide examples of finitely generated infinite simple amenable groups.

The new result in Theorem \ref{thm:main-intro} is the computation of
collision rank. Finite generation and simplicity were proved by Matui
\cite{Matui}, amenability by Juschenko--Monod \cite{JM}, and exclusion
from $\SG$ by Grigorchuk--Medynets \cite{GM}. 

The proof of Theorem \ref{thm:main-intro} consists of two main parts. Long periodic blocks give a bound on the number
of cosets of a half-orbit stabilizer represented by products of a suitable
length. This bound is smaller than the number of leaves of the corresponding collision tree, so two leaf products then have
a difference group element in this stabilizer. The stabilizer is locally finite \cite{GM}, so the
finitely many possible differences generate a finite group, and a second
collision gives the identity. 

The topological argument---the proof of Theorem \ref{thm:main}---may be thought of as an iteration of the argument of
\cite{KQ} proving that subexponential groups are good. Roughly speaking, starting with a finite collection of group elements representing double point loops of a capped grope, each collision step is realized by geometric moves on the capped grope. The result is a collection of double point loop elements in the next prescribed finite set. Repeating
this on a grope of sufficient height gives a $\pi_1$-null capped grope, sufficient for the proof of the disk embedding theorem.

The new good groups, constructed in Theorem \ref{thm:main-intro}, are not finitely presented \cite[Theorem 5.7]{Matui}, so they cannot occur as fundamental groups of compact manifolds.
It is a natural question
whether these methods may produce finitely presented good groups outside $\SG$.
In Proposition \ref{thm:free} we show that nonabelian free groups have infinite  collision rank, so the criterion of Theorem~\ref{thm:main}
does not settle whether they are good.

The paper is organized as follows. Section \ref{sec: collision} introduces the collision rank, giving two equivalent definitions: one using trees and a game-theoretic one. The proof of Theorem \ref{thm:main} is given in Section \ref{sec:topology}. Sections \ref{sec:dynamics} and \ref{sec:examples} prove Theorem \ref{thm:main-intro}, starting from a brief discussion of topological full groups, and concluding with the construction of an uncountable collection of Sturmian examples. Section \ref{sec:free} discusses properties of the collision rank and gives a proof that for nonabelian free groups it is infinite.

{\bf Acknowledgements.} I am grateful to Mike Freedman for numerous conversations on all aspects of $4$-manifold topology over the years, and for the suggestion to use AI to study it. 

The main topological input, motivating the collision rank definition, is based on the methods developed in \cite{KQ}. I would like to thank Frank Quinn for sharing his insight and for many discussions of these techniques. 

This work was supported in part by NSF grant DMS-2405044, Renaissance Philanthropy’s AI for Math Seed Grant, and by Logical Intelligence.

{\bf AI disclosure.} The author formulated the original collision property in groups and its topological applications, including the proof of Theorem \ref{thm:main}. 
Substantial parts of the group-theoretic strategy were developed during an extended dialogue with ChatGPT-5.6 Sol. In particular, it was instrumental in identifying derived topological full groups of Sturmian systems as candidates and in developing the half-orbit stabilizer coset counting and periodic block arguments used to prove finite collision rank for these groups in Sections \ref{sec:dynamics}-\ref{sec:examples}. ChatGPT-5.6 Sol and Claude Opus 5 were used also to develop and check properties of the collision rank (Section \ref{sec:free}) and to draw Figure \ref{fig:periodic}. The author checked, revised, and takes responsibility for all arguments.

\section{collision rank}\label{sec: collision}

Given a group $G$, throughout the paper a {\em label set} will refer to a finite symmetric subset
containing $1$. 

Let $T_h$ be the complete rooted trivalent tree of height $h$. Every leaf of $T_h$ is
at distance $h$ from the root, and there are a total of $3\cdot2^{h-1}$ leaves.
Two vertices are called {\em incomparable} if neither one lies on the geodesic path from the
root to the other. 

\begin{definition}\label{def:stage}
Given two label sets $A,B\subseteq G$, we write $A\tarrow B$ if there is an
integer $h\geq1$ such that for  every edge labeling of $T_h$ by elements of
$A$, there exist two incomparable vertices $v,w$ with
\[
 p(v)\cdot p(w)^{-1}\in B.
\]
Here $p(v)=a_k\cdots a_1\in G$, where $a_1,\ldots,a_k$ are the ordered edge labels along
the path from the root to $v$.
\end{definition}

Since $B$ is symmetric, the order
of $v,w$ is not important. Denote $A^h:=\{a_h\cdots a_1:a_i\in A\}$.

\begin{definition}\label{def:rank}
The {\em collision rank} $\crk(G)$ is the least
$r\in\Nzero$ such that every label set $A_0\subseteq G$ admits
label sets $A_1,\ldots,A_r$ with
\[
 A_0\tarrow A_1\tarrow\cdots\tarrow A_r=\{1\}.
\]
If no such $r$ exists, set $\crk(G)=\infty$.
\end{definition}

By definition, $\crk(G)=0$ if and only if $G$ is trivial.
The definition for $r=1$ is motivated by the group-theoretic effect of the geometric moves in the proof in \cite{KQ} that groups of subexponential growth are good, discussed in the next section. Moreover, the condition $\crk(G)\leq 1$ characterized supramenability, see Proposition \ref{lem:supramenability}.

\begin{remark}
There is an equivalent game-theoretic definition of collision rank. There are two players, Pruner and Labeler. Starting with a single root, Pruner adds three edges at the first move and thereafter adds two edges at a chosen leaf. After each move, Labeler assigns an element of $A$ to each new edge. Pruner wins after finding incomparable
vertices whose difference group element $p(v)\cdot p(w)^{-1}$ lies in $B$. It is not difficult to see that $A\tarrow B$ if and only if Pruner has a winning strategy.
\end{remark}

We discuss properties of the collision rank in more detail in Section \ref{sec:free}. 
The following observation isolates the group-theoretic argument used in the proof of Theorem \ref{thm:main-intro}. Note that the subgroup $H$ is not assumed to be normal.

\begin{lemma}\label{lem:coset-criterion}
Let $A$ be a label set in $G$. Let $H\leq G$ be a locally
finite subgroup, and suppose there exists $h\geq1$ such that
\begin{equation}\label{eq:coset criterion}
 \#\{Hg:g\in A^h\}<3\cdot2^{h-1}.
\end{equation}
Then, setting $C=H\cap A^h(A^h)^{-1}$,
\[
 A\tarrow C\tarrow\{1\}.
\]
Therefore if such $H$ and $h$ exist for every label set $A$, then $\crk(G)\leq2$.
\end{lemma}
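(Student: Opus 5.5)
The plan is to prove the two arrows separately, both by a pigeonhole argument on the leaves of a complete trivalent tree. The key observation is that distinct leaves of $T_h$ are automatically incomparable, since each lies at distance exactly $h$ from the root. So it suffices to find two distinct leaves whose products are suitably related.

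First I will check that $C=H\cap A^h(A^h)^{-1}$ is a label set. It is finite because $A$ is finite, so $A^h$ is finite. It contains $1$ because $1\in A$ gives $1\in A^h$, and $1\in H$. It is symmetric because $(gk^{-1})^{-1}=kg^{-1}$ and $H$ is closed under inverses.

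For $A\tarrow C$, I take the integer $h$ from \eqref{eq:coset criterion}. Fix any edge labeling of $T_h$ by elements of $A$. Each of the $3\cdot2^{h-1}$ leaves $v$ has $p(v)\in A^h$. By \eqref{eq:coset criterion}, the map $v\mapsto Hp(v)$ from leaves to right cosets takes fewer than $3\cdot 2^{h-1}$ values. Hence there are distinct leaves $v,w$ with $Hp(v)=Hp(w)$, which means $p(v)p(w)^{-1}\in H$. This element also lies in $A^h(A^h)^{-1}$, so it lies in $C$. Since $v$ and $w$ are incomparable, this gives $A\tarrow C$.

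For $C\tarrow\{1\}$, I use local finiteness of $H$. The set $C$ is a finite subset of $H$, so $K=\langle C\rangle$ is a finite group, say of order $n$. I choose $h'$ with $3\cdot2^{h'-1}>n$. For any labeling of $T_{h'}$ by elements of $C$, every leaf product lies in $K$. By pigeonhole, two distinct leaves $v,w$ have $p(v)=p(w)$, so $p(v)p(w)^{-1}=1$, and again these leaves are incomparable.

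The final claim follows directly from Definition~\ref{def:rank}. If such $H$ and $h$ exist for every label set $A_0$, then the chain $A_0\tarrow C\tarrow\{1\}$ shows $\crk(G)\le 2$. I do not expect a genuine obstacle. The only points needing care are that $C$ is a label set, that distinct leaves are incomparable, and that the order convention in $p(v)p(w)^{-1}$ matches right cosets $Hg$.
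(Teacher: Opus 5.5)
Your proof is correct and follows the paper's argument essentially verbatim. Pigeonholing the $3\cdot 2^{h-1}$ leaves of $T_h$ against right $H$-cosets gives $A\tarrow C$, and the finiteness of $\langle C\rangle$ plus a second pigeonhole on a taller tree gives $C\tarrow\{1\}$; your explicit checks that $C$ is a label set and that distinct leaves are incomparable fill in details the paper leaves implicit.
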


\begin{proof}
The set $C$ is finite, symmetric, and contains $1$. By
\eqref{eq:coset criterion}, two leaves of any $A$-labeling of $T_h$
have products $f,g$ in the same coset $Hf=Hg$. Thus $fg^{-1}\in C$,
which proves $A\tarrow C$. By assumption, the subgroup
$\langle C\rangle$ is finite. A second tree with more than
$|\langle C\rangle|$ leaves has two equal leaf products under every
$C$-labeling, proving $C\tarrow\{1\}$.
\end{proof}

\begin{lemma}\label{lem:free-monoid}
If $G$ contains a free submonoid of rank three, then $\crk(G)\geq2$.
\end{lemma}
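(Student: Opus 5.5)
Since $\crk(G)=0$ only for the trivial group, it suffices to rule out $\crk(G)\le 1$. To do that, I will exhibit one label set $A$ with $A\not\tarrow\{1\}$. Concretely, for every height $h\ge1$ I will build an $A$-labeling of $T_h$ in which any two incomparable vertices $v,w$ satisfy $p(v)\ne p(w)$.

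Let $a,b,c\in G$ freely generate a free submonoid of rank three, and take $A=\{1,a,b,c,a^{-1},b^{-1},c^{-1}\}$. This set is finite, symmetric and contains $1$, so it is a label set. The labeling of $T_h$ is chosen as follows. The three edges at the root get the labels $a,b,c$. At every other internal vertex, the two child edges get two distinct labels from $\{a,b,c\}$, say $a$ and $b$. With this choice every $p(v)$ is a positive word in $a,b,c$. Moreover, edges leaving a common vertex always carry distinct letters.

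Now take incomparable vertices $v,w$, and let $u$ be the vertex where the root-paths to $v$ and $w$ diverge. Since $v,w$ are incomparable, $u\ne v,w$. The paths leave $u$ through different child edges, labelled by distinct letters $s\ne t$. Because products are read right to left, we can write
\[
p(v)=x\,s\,p(u),\qquad p(w)=y\,t\,p(u)
\]
for positive words $x,y$ in the letters $a,b,c$. Suppose $p(v)=p(w)$. Cancelling $p(u)$ on the right (legitimate, since we are in a group) gives $xs=yt$. This is an equality between two positive words that end in different letters, so it is an equality of two distinct elements of the free monoid on $a,b,c$. That contradicts freeness, so $p(v)p(w)^{-1}\ne1$.

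Since this holds for every $h$, we get $A\not\tarrow\{1\}$. Hence $\crk(G)\ne1$, and therefore $\crk(G)\ge2$.

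The only step needing care is the bookkeeping of the product order $p(v)=a_k\cdots a_1$. It puts the shared prefix $p(u)$ on the right, which is exactly where it can be cancelled. It also puts the distinguishing letters $s,t$ at the right end of the remaining words, where freeness of the monoid separates them. No other obstacle arises.
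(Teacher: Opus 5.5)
Your proof is correct and follows the paper's own proof: you complete the three free monoid generators to a label set, label the root edges by the three generators and each pair of sibling edges by distinct generators, and conclude that no two incomparable vertices have equal products. Your cancellation of the common right factor $p(u)$, followed by comparing the distinct final letters $s\neq t$, fills in the step the paper leaves implicit.
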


\begin{proof}
Complete three free monoid generators to a label set $A$. For every
$h$, label the root edges of $T_h$ by the three generators, and the
two outgoing edges at every other internal vertex by distinct
generators. This labeling has no collision to
$\{1\}$, so $A\not\tarrow\{1\}$.
\end{proof}

\section{Application to 4-manifold topology}\label{sec:topology}

This section contains the proof of Theorem \ref{thm:main}. The proof is based on an iteration of the construction of \cite{KQ}. We review the ideas of that proof and explain how they apply in the present context. 
The geometric step realizes a path in a
tree with edges labeled by group elements by modifying intersections between caps and then moving them down to the base surface of a given capped grope. The group elements of
the resulting double points are the product along the path. We use this step with products in a prescribed symmetric set, thus geometrically implementing a single collision stage $A_i\tarrow A_{i+1}$. When the fundamental group $\pi$ of the ambient $4$-manifold has finite collision rank, after an iterated application of this step all double point loops represent the trivial element. Note that the term {\em collision} had a different usage in \cite[Section 2.6]{KQ}, referring to a certain type of intersection points. The collision rank in this paper is an unrelated concept.

We use the $\pi_1$-null disk formulation of good groups; see
\cite{FQ,FT,BKKPR}. As is common in the subject, we will interchangeably use the term capped grope to describe both the underlying $2$-complex and its untwisted $4$-dimensional thickening; the precise meaning will be clear from context. Given a
disk-like capped grope of height
$3/2$ and a homomorphism from its fundamental group to $\pi$, one must find
an immersed disk with the same framed attaching region whose double point
loops map trivially to $\pi$. Grope height raising allows us to start with
any prescribed finite height. To simplify counting in the inductive construction, we will work with gropes of height $2$ rather than $3/2$, and will produce a capped grope whose fundamental group has trivial image in $\pi$.
All constructions take place in the given thickening and preserve the
attaching region. We use the usual convention that the body of a properly
immersed capped grope is embedded and all double points are among caps.

Fix base arcs for the capped grope at the beginning of the argument, and record the images under the given
homomorphism to $\pi$ of double
point loops using both orders of the sheets. Thus the set of labels is
symmetric. Grope splitting, introduced in \cite{K} and illustrated in Figure \ref{fig:splitting}, carries these basings to the new
branches. As discussed below in more detail, the proof in \cite{KQ} is based on a systematic application of grope splitting, uniformizing the pattern of intersections between gropes to any given distance. 

\begin{figure}[ht]
\centering
\includegraphics[height=5cm]{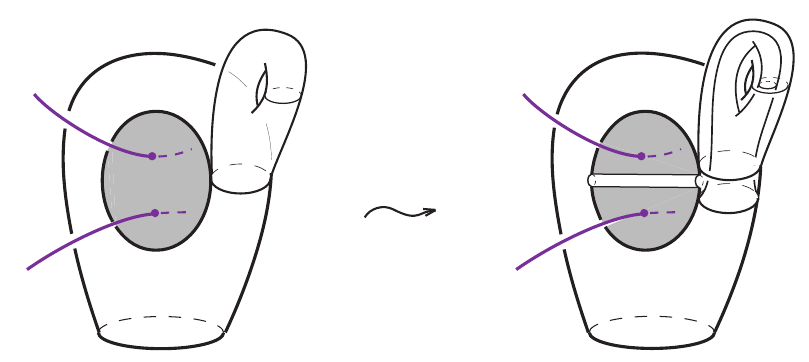}
\caption{Grope splitting. If two caps (drawn purple), intersecting a given cap, have different group elements or different dyadic labels, the cap may be split into two at the expense of increasing the genus of the base surface. The dual subgropes for the two new caps are parallel copies of the original one.}
\label{fig:splitting}
\end{figure}

The following lemma is a geometric counterpart (and the key motivation) for a group-theoretic collision stage in Definition \ref{def:stage}.

\begin{lemma}[Inductive step: one geometric stage]\label{lem:geometric-stage}
Let $A,B\subseteq\pi$ be label sets with $A\tarrow B$.
Suppose a properly immersed disk-like capped grope has height 
$k+2$, where $k\geq 0$, and all its double point group elements lie in $A$.
Then its thickening contains a properly immersed capped grope of height $k$, with the same framed attaching region, all of whose double
point group elements are in $B$.
\end{lemma}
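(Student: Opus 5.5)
The plan is to follow the scheme of \cite{KQ}: uniformize the intersection pattern of the caps by grope splitting, and then transport the double points of the top two stages down to the $k$-th stage by contraction and pushing off. The trivalent tree $T_h$ of Definition~\ref{def:stage} will be realized inside the grope as follows. The root corresponds to the $3$ branching directions at the first relevant stage. A disk-like grope of height $2$ has a symmetric-genus surface at each stage, so after splitting we can arrange that each stage contributes the branching of $T_h$: three edges at the root and two at each later internal vertex, where the two new edges are the dual pair of caps. The label of a tree edge will be the group element of an intersection met along the corresponding cap.

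\textbf{Step 1 (splitting).} Apply grope splitting, as in Figure~\ref{fig:splitting}, repeatedly to the top stages. The goal is that every cap meets at most one other cap, with a single group element from $A$ and a single dyadic label. This uses the fact that splitting preserves basings and the set of labels, so all double point elements stay in $A$. After splitting, the intersection data of the top levels, read along iterated contractions, defines an $A$-labeling of a tree. Since $A\tarrow B$ provides some $h$, we first raise the height of the top part by $h$ stages using grope height raising. This is legitimate inside the thickening because height raising is available for capped gropes of height at least $3/2$. Only the two top layers relative to the height-$k$ core are consumed.

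\textbf{Step 2 (tree as moves).} Contract along a path from the root to a vertex $v$. Each contraction or push-off along an edge multiplies the double point loop by that edge's label, so the double point created at the base stage carries $p(v)$, the ordered product along the path, in the sense of Definition~\ref{def:stage}. Applying $A\tarrow B$ to this labeling gives incomparable vertices $v,w$ with $p(v)p(w)^{-1}\in B$. Incomparability guarantees that the two subgropes are disjoint branches, so contracting both and tubing the corresponding caps together (Clifford-torus or push-down moves) produces intersections of the resulting height-$k$ capped grope. These intersections pair sheets based at $v$ and $w$, so their group elements are exactly $p(v)p(w)^{-1}$ or its inverse, hence in $B$. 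All other intersections are removed by being routed through this pair.

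\textbf{Step 3 (bookkeeping).} Check that the resulting object is a properly immersed capped grope of height $k$ with the same framed attaching region, and that all of its double points arise from the collision. The main obstacle is Step 2. A single tree labeling of depth $h$ must be made to come from the intersection pattern uniformly for all branches at once, while the collision is applied branch by branch. I would handle this by applying the dichotomy separately at each stage-$k$ cap, after the splitting uniformization of \cite{KQ}. One must also verify that pushing intersections down does not introduce new double points with labels outside $B$. For this I would use the dyadic labels, which keep the parallel copies disjoint.
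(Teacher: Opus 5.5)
Your proposal has genuine gaps. The first is how you realize $T_h$. The tree is not built from grope stages. In the paper (following \cite[Section~4.1]{KQ}), its vertices are the genus-one dyadic branches of the split top height-$2$ gropes, and its edges are cap--cap intersections between different branches. Depth $h$ is therefore reached by following intersections through many branches, all within the two top stages. No height raising is used. Raising the top part by $h$ stages would not produce the needed labeled tree anyway, since internal edges would then be embedded body surfaces carrying no double point group elements. The uniformization you aim for in Step 1 is also not the relevant one. What is needed is splitting into dyadic branches with uniform $2h$-types and no geometric collisions up to distance $2h$ \cite[Sections~2.5--2.6]{KQ}. Then every branch sees an $A$-labeled intersection tree of height $h$, and all intersections along any path of length at most $2h$ have matching group elements.

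The collision move in Step 2 is not the right one, and this is the missing idea. Label the abstract tree by inverses, so that $p(v)=q(v)^{-1}$, where $q$ is the product read from the root (this also fixes the order reversal in your claim that the base double point ``carries $p(v)$''). Then $p(v)p(w)^{-1}=q(v)^{-1}q(w)$ is the ordered product along the geodesic from $v$ to $w$: the common initial segment cancels, so the path has length at most $2h$. Incomparability makes this path enter and leave each intermediate branch through different caps. This single path is implemented by the branch elimination of \cite[Section~4.2]{KQ}:
\begin{itemize}
\item contract the first branch across its chosen cap;
\item push the resulting base--cap intersections down and back up through each successive cap, so their labels become $g_1$, $g_1g_2,\ldots$;
\item push down at the last branch, giving base--base intersections labeled $g_1\cdots g_m\in B$.
\end{itemize}
Your route does not achieve this. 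Contracting along root-to-$v$ and root-to-$w$ paths creates intersections labeled $q(v),q(w)\in A^h$, which need not lie in $B$. Clifford-torus tubing introduces further intersections with uncontrolled labels, and ``routing all other intersections through this pair'' has no justification. The check that no other base--base intersections arise uses the absence of geometric collisions through distance $2h$, not dyadic labels alone.

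Finally, the global issue you flag as the main obstacle is resolved by iteration, not by working per stage-$k$ cap. Each elimination removes the branches on the path and creates only base--base intersections labeled in $B$, while the remaining cap intersections keep their types and labels in $A$. You then contract newly free caps; this is also how one proceeds when an intersection needed along a path has already disappeared. Repeating, finiteness of the set of branches replaces the top gropes by immersed disks, which serve as caps on the unchanged bottom $k$ stages.
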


\begin{proof}
The bottom $k$ stages will be kept fixed. Regard the top two stages 
as a collection of capped gropes of height $2$, with basing arcs
inherited through the lower stages. The construction will replace these
top gropes by immersed disks.

Let $h$ be the tree height certifying $A\tarrow B$. Recall from Definition \ref{def:stage} that this means: for any labeling of the tree $T_h$ by elements of A, there exist two incomparable vertices $v,w$ with $p(v)\cdot p(w)^{-1}\in B$.  Split the top height $2$ gropes into dyadic branches
with uniform $2h$-types and with no geometric collisions up to distance $2h$, using \cite[Sections 2.5--2.6]{KQ}. Here a geometric collision
is the intersection configuration defined in that reference, distinct
from a collision of group labels. These two properties are preserved
under further splitting.

A branch with a free cap can be contracted across that cap without
creating double points. After all such contractions, starting from a branch (a genus 1 part of the base surface) the construction
of \cite[Section~4.1]{KQ} gives an intersection tree of height $h$. Vertices correspond to branches and edges record intersections of
caps, Figure \ref{fig:grope_tree}. There are three edges at the root and at two new edges at every subsequent vertex. We will use a
trivalent tree of height $h$, given by this construction.

\begin{figure}[ht]
\centering
\begingroup
\setlength{\unitlength}{1cm}
\begin{picture}(15.176,4.5)
  \put(0,0){\includegraphics[height=4.5cm]{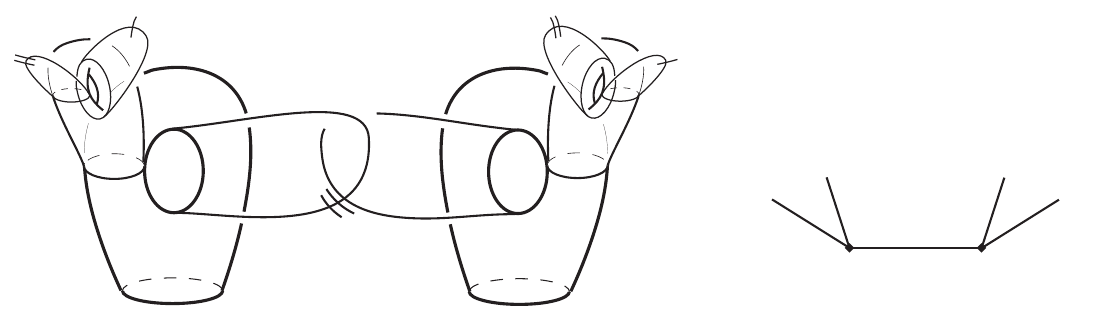}}
  \put(0.25,4.00){\makebox(0,0){\small $g_1$}}
  \put(2.15,4.23){\makebox(0,0){\small $g_2$}}
  \put(4.63,1.05){\makebox(0,0){\small $g_3$}}
  \put(7.25,4.23){\makebox(0,0){\small $g_4$}}
  \put(9.37,4.00){\makebox(0,0){\small $g_5$}}
  \put(10.94,1.23){\makebox(0,0){\small $g_1$}}
  \put(11.80,1.82){\makebox(0,0){\small $g_2$}}
  \put(12.64,0.72){\makebox(0,0){\small $g_3$}}
  \put(13.40,1.82){\makebox(0,0){\small $g_4$}}
  \put(14.49,1.23){\makebox(0,0){\small $g_5$}}
\end{picture}
\endgroup
\caption{}
\label{fig:grope_tree}
\end{figure}

To compare conventions between this paper and \cite{KQ}, let $q(v)$ be the product of group elements along the geodesic path from the root to $v$, in that order. Label the
abstract tree by their inverses, which still belong to $A$. The convention
of Definition \ref{def:stage} then gives $p(v)=q(v)^{-1}$. Thus $A\tarrow B$
provides incomparable vertices $v,w$ with
\[
 q(v)^{-1}\cdot q(w)=p(v)\cdot p(w)^{-1}\in B.
\]
The common initial portion of the two root paths cancels. This is exactly
the product along the geodesic from $v$ to $w$, whose length is at most
$2h$. Incomparability ensures that the path enters and leaves each
intermediate branch through different caps. Uniform types ensure that all cap intersections encountered along the path have matching group elements. If a required intersection has
disappeared in an earlier contraction, a free cap occurs along the path
and can instead be contracted, as in \cite[Section~4.1]{KQ}.

We apply the branch elimination of \cite[Section~4.2]{KQ} to this path.
Write its group element labels as $g_1,\ldots,g_m$ and set
$g:=g_1\cdots g_m\in B$. Contract the first branch across its
chosen cap. The new intersections between a base surface and caps have
group elements $g_1$. Push them down and back up through the next chosen
cap. Their labels become $g_1 g_2$. Continuing along the path
gives base-cap intersections with label $g$; pushing these down at the
last branch gives base-base intersections with group element $g\in B$.
With the opposite sheet order their label is $g^{-1}$, also in $B$.

The verification in \cite[Section 4.2]{KQ} that no other base-base
intersections occur uses the absence of geometric collisions through
distance $2h$. The fixed base arcs give the displayed products
literally, and not up to conjugation.

Each such operation removes branches and creates only base-base
intersections labeled in $B$. The remaining cap intersections retain
their types and group element labels in $A$. Contract newly free caps and repeat.
There are finitely many branches, so the process ends with immersed
disks replacing all the top gropes. These disks are the new caps on the
unchanged lower body. Their double point labels lie in $B$, as required.
\end{proof}

\begin{proof}[Proof of Theorem~\ref{thm:main}]
Let $r=\crk(\pi)<\infty$. If $r=0$, then $\pi$ is trivial and hence good. Therefore assume $r\geq 1$. Start with a capped grope of height
$2$ and the given homomorphism to $\pi$. Raise its height to $2r$,
and let $A_0$ be a label set containing its double point
labels. As in Definition~\ref{def:rank}, choose a chain
\[
 A_0\tarrow A_1\tarrow\cdots\tarrow A_r=\{1\}.
\]
Apply Lemma~\ref{lem:geometric-stage} successively, beginning at the top.
After $i$ steps the capped grope has height $2(r-i)$, and
its double point labels lie in $A_i$. The final application of the lemma gives a $\pi_1$-null disk.
This proves that $\pi$ is good.
\end{proof}

For the groups considered in Sections \ref{sec:dynamics}, \ref{sec:examples} the argument has a particularly direct form.
Given the initial labels $A$, we find a locally finite subgroup $H$
and a height $h$ for which there are fewer $H$-cosets represented
by $A^h$ than leaves of the collision tree. Two leaves therefore give
a difference group element in the finite symmetric set
\[
 C=H\cap A^h(A^h)^{-1}.
\]
The first geometric stage puts all double point labels in $C$.
Since $H$ is locally finite, the subgroup generate by $C$ is finite (of size depending in general on the initial label set $A$), and a second geometric
stage makes every group element trivial. This iteration is analogous to the argument used for
an extension of subexponential groups; here coset counting enables the
first collision stage in place of subexponential growth in a quotient. In our case the groups $\pi$ are simple and $H$ is not a normal subgroup.

\section{Subshifts and topological full groups}\label{sec:dynamics}
The following two sections give a proof of Theorem \ref{thm:main-intro}. 
We refer the reader to \cite[Sections 2-3]{GM} for an introductory discussion of algebraic properties of topological full groups, and to \cite{LM} for general background on symbolic dynamics, and in particular Sturmian systems.

Let $X\subseteq\mathcal A^{\mathbb Z}$ be a minimal aperiodic subshift over
 a finite alphabet, and let $T$ be the left shift, acting in coordinates as $(Tx)_i=x_{i+1}$ for $x\in X$.  Let $\mathcal L(X)$ denote its language.  The topological full group \cite{GPS} is defined as
\[
 [[T]]=\{g\in\operatorname{Homeo}(X):g(z)=T^{c_g(z)}z,
          \ c_g\colon X\longrightarrow\mathbb Z\text{ continuous}\}.
\]
Aperiodicity makes $c_g$ unique.  The proof in Section \ref{sec:periodic-window-estimate} uses long powers in
$\mathcal L(X)$ to give a bound on the number of cosets of a locally finite subgroup, and this bound is used in Section \ref{sec:two stages} to deduce two collision stages.

\subsection{Half-line stabilizers}\label{sec:minimal-subshifts}
We note that orbit permutations and locally finite half-orbit stabilizers, used in the arguments below, also
play an important role in the amenability proof of
\cite[Sections 3-4]{JM}.
For $x\in X$, following \cite{P, GPS} define the half-line and its stabilizer:
\[
 P_x=\{T^n x:n\geq0\},\qquad
 H_x=\{g\in[[T]]:g(P_x)=P_x\}.
\]
The subgroup $H_x$ is locally finite, see
\cite[Section 5]{P}, \cite{GPS}, and also the summary in \cite[Theorem 2.2(2)]{GM}.
The orbit of $x$ will be identified with $\mathbb Z$ by $T^n x\leftrightarrow n$, this map gives orbit coordinates.
In these coordinates, $g\in[[T]]$ acts by the permutation \cite[Section 4]{JM}
\begin{equation}\label{eq:orbit-permutation}
 \pi_g^x(n)=n+c_g(T^n x).
\end{equation}
Since $c_g$ has finite image, this permutation of the integers has bounded displacement. (The group of such bijections is denoted $W({\mathbb Z})$ in \cite{JM}). 
Let $\mathbb N_0=\{0,1,2,\ldots\}$.

\begin{lemma}\label{lem:defect-dictionary}
For $f,g\in[[T]]$,
\begin{equation} \label{eq:half line} 
 H_xf=H_xg
 \quad\Longleftrightarrow\quad
 (\pi_f^x)^{-1}(\mathbb N_0)=(\pi_g^x)^{-1}(\mathbb N_0).
\end{equation}
\end{lemma}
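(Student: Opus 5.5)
The plan is to reduce the coset equality to a statement about the single element $fg^{-1}$, and then translate membership in $H_x$ into orbit coordinates.

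First, $H_xf=H_xg$ holds if and only if $fg^{-1}\in H_x$, that is, $fg^{-1}(P_x)=P_x$. Since $fg^{-1}$ is a bijection of $X$, this is equivalent to $f^{-1}(P_x)=g^{-1}(P_x)$. Indeed, apply $f^{-1}$ to both sides of $fg^{-1}(P_x)=P_x$, and conversely apply $f$.

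Next, translate preimages of $P_x$ into orbit coordinates. Every $h\in[[T]]$ preserves $T$-orbits, since $h(z)=T^{c_h(z)}z$; the same holds for $h^{-1}\in[[T]]$. So $h^{-1}(P_x)$ is contained in the orbit of $x$. Under the identification $T^nx\leftrightarrow n$, the point $T^nx$ lies in $h^{-1}(P_x)$ exactly when $h(T^nx)\in P_x$. By \eqref{eq:orbit-permutation}, $h(T^nx)=T^{n+c_h(T^nx)}x=T^{\pi^x_h(n)}x$. Aperiodicity makes $n\mapsto T^nx$ injective, so this point lies in $P_x$ if and only if $\pi^x_h(n)\in\mathbb N_0$. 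Hence $h^{-1}(P_x)$ corresponds exactly to $(\pi_h^x)^{-1}(\mathbb N_0)$.

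Applying this with $h=f$ and $h=g$, the condition $f^{-1}(P_x)=g^{-1}(P_x)$ becomes $(\pi_f^x)^{-1}(\mathbb N_0)=(\pi_g^x)^{-1}(\mathbb N_0)$. This proves \eqref{eq:half line}.

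The only point needing care is the conventions: multiplication in $[[T]]$ is composition, and the coset $H_xf$ corresponds to $fg^{-1}\in H_x$ rather than $f^{-1}g$. There is no real obstacle beyond keeping these conventions consistent. I would not need $\pi_{fg}=\pi_f\circ\pi_g$ explicitly, since the argument works directly with subsets of $X$.
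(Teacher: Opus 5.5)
Your proposal is correct and takes the same route as the paper. You reduce $H_xf=H_xg$ to $fg^{-1}\in H_x$, equivalently $f^{-1}(P_x)=g^{-1}(P_x)$, and then identify $h^{-1}(P_x)$ with $(\pi_h^x)^{-1}(\mathbb N_0)$ in orbit coordinates. The only difference is that you spell out this last translation, including the use of aperiodicity to make $n\mapsto T^nx$ injective, where the paper states it in one line.
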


\begin{proof}
The equality $H_xf=H_xg$ is equivalent to $fg^{-1}\in H_x$ and to
$f^{-1}P_x=g^{-1}P_x$. In orbit coordinates, the action of $f^{-1}$ on $P_x$ corresponds to the action of $(\pi_f^x)^{-1}$ on ${\mathbb N}_0$, giving the right hand side of equation (\ref{eq:half line}).
\end{proof}

\subsection{A bound on cosets in a periodic block}\label{sec:periodic-window-estimate}
In the discussion below, a {\em periodic block}, or more precisely a block of period $p$, refers to an interval $I$ on which $x$ is $p$-periodic, i.e. $x_{i+p}=x_i$
whenever $i,i+p\in I$.

Let $A\subseteq[[T]]$ be a finite nonempty subset.  By continuity of $c_g\colon X\longrightarrow\mathbb Z$ on $X$ and compactness of $X$, there exist integers $D\geq 1$ and
$\rho\geq 0$ such that, for every $a\in A$,
\[
 |c_a(z)|\leq D\quad \text{ for all } z\in X,\qquad
 c_a(z)\text{ is determined by }z[-\rho,\rho].
\]
Here $z[i,j]$ denotes
 the subword $z_i z_{i+1}\cdots z_j$ of $z$, and {\em determined by $z[-\rho,\rho]$} means that if $z[-\rho,\rho]=z'[-\rho,\rho]$ then $c_a(z)=c_a(z')$. The following proposition establishes a key bound used in the proof of Theorem \ref{thm:main-intro}, and Figure \ref{fig:periodic} illustrates the notation and the dynamics underlying the arguments.

\begin{proposition}\label{prop:periodic-window}
Let $h,p\geq1$ be integers and $D,\rho$ be defined as above. Set $R:=Dh$.  Suppose that $x \in X$ is
$p$-periodic on an interval containing
\[
 [-2R-\rho,\,2R+\rho].
\]
Then
\[
 \#\{H_xg:g\in A^h\}\leq\left(\frac{2Dh}{p}+2\right)^p.
\]
\end{proposition}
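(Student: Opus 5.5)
The plan is to use Lemma~\ref{lem:defect-dictionary}, which says the coset $H_xg$ is determined by the set $S_g:=(\pi_g^x)^{-1}(\mathbb N_0)\subseteq\mathbb Z$. So it suffices to bound the number of distinct sets $S_g$ as $g$ ranges over $A^h$. I will show two things. First, $S_g$ is determined by its intersection with a window of length about $2R$. Second, inside that window $p$-periodicity forces $S_g$ to be an up-set in each residue class mod $p$. Counting thresholds per residue class then gives the bound.

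\emph{Step 1: localization.} Write $g=a_h\cdots a_1$ and set $g_j=a_j\cdots a_1$. The cocycle identity gives $\pi^x_{g}=\pi^x_{a_h}\circ\cdots\circ\pi^x_{a_1}$, and each factor moves points by at most $D$. Hence $|\pi_g^x(n)-n|\le Dh=R$, and every intermediate point $\pi^x_{g_j}(n)$ lies within $R$ of $n$. Consequently every $n\geq R$ lies in $S_g$ and every $n<-R$ does not. So $S_g$ is determined by $S_g\cap[-R,R)$.

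\emph{Step 2: local $p$-equivariance.} Take $n$ with $n,n+p\in[-R,R]$. The intermediate points $m_j=\pi^x_{g_j}(n)$ lie in $[-2R,2R]$. The value $c_{a_{j+1}}(T^{m_j}x)$ depends only on $x[m_j-\rho,m_j+\rho]\subseteq[-2R-\rho,2R+\rho]$, and the same holds with $m_j$ replaced by $m_j+p$. Since $x$ is $p$-periodic on an interval containing $[-2R-\rho,2R+\rho]$, the words $x[m_j-\rho,m_j+\rho]$ and $x[m_j+p-\rho,m_j+p+\rho]$ agree. This needs the shifted window to stay inside the periodic interval, which is the delicate point: if necessary I will restrict $n$ so that $m_j,m_j+p$ both stay within $2R$, losing only boundary terms absorbed by the ``$+2$''. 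Induction on $j$ then gives $\pi^x_{g_j}(n+p)=\pi^x_{g_j}(n)+p$ for all $j$, and in particular $\pi_g^x(n+p)=\pi_g^x(n)+p$.

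\emph{Step 3: counting.} By Step 2, if $n\in S_g$ (so $\pi_g^x(n)\geq0$) and $n+p$ is in the window, then $\pi^x_g(n+p)\geq p\geq0$, so $n+p\in S_g$. Thus for each residue class $r$ mod $p$, the set $S_g\cap(r+p\mathbb Z)\cap[-R,R)$ is a terminal segment of the arithmetic progression. That progression has at most $2R/p+1$ elements, so there are at most $2R/p+2$ possible terminal segments, including the empty one. Together with Step 1, $S_g$ is determined by $p$ such choices, giving
\[
\#\{H_xg:g\in A^h\}\le\left(\frac{2Dh}{p}+2\right)^p.
\]

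I expect the main obstacle to be the bookkeeping in Step 2. One must check that every intermediate orbit point and its $\rho$-neighborhood, for both $n$ and $n+p$, lies inside the periodic interval. This is what dictates the margin $2R+\rho$ in the hypothesis, and it may require slightly shrinking the range of $n$ at the right endpoint.
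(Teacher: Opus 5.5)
Your proposal is correct and follows essentially the same route as the paper: bounded displacement localizes $(\pi_g^x)^{-1}(\mathbb N_0)$ to a window around $0$, and the simultaneous-trajectory induction (the paper's Lemma~\ref{lem:congruence}) makes $\pi_g^x$ a translation on each residue class in $[-R,R]$, so the preimage is a terminal segment there, and counting terminal segments gives the bound. The boundary worry in your Step 2 is unfounded and no restriction of $n$ is needed: by the induction hypothesis, $m_j+p=\pi^x_{g_j}(n+p)$ lies within $R$ of $n+p\in[-R,R]$, so both windows $x[m_j-\rho,m_j+\rho]$ and $x[m_j+p-\rho,m_j+p+\rho]$ already lie in $[-2R-\rho,2R+\rho]$.
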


\begin{proof}
For $g=b_h\cdots b_1\in A^h$, each factor moves orbit
coordinates by at most $D$, so for all $n\in\mathbb Z$,
\begin{equation}\label{eq:bounded-displacement}
 |\pi_g^x(n)-n|\leq R.
\end{equation}
The following lemma applies the argument of
\cite[Lemma 3.5]{MB} to a block of period $p$.
\begin{lemma} \label{lem:congruence}
In the context of Proposition \ref{prop:periodic-window}, \begin{equation}\label{eq:local-periodicity}
 m,n\in[-R,R],\quad m\equiv n\pmod p
 \quad\Longrightarrow\quad
 \pi_g^x(m)-m=\pi_g^x(n)-n.
\end{equation}
\end{lemma}
{\em Proof of Lemma \ref{lem:congruence}.} Evaluate $b_1,\ldots,b_h$ from the starting points $m,n$ simultaneously.
Both trajectories stay in $[-2R,2R]$.  Whenever their difference is
$m-n$, by periodicity the radius-$\rho$ intervals around them in $x$ agree.  Since the action is determined by these radius-$\rho$ intervals, next factor then moves them
by the same amount.  Induction preserves their difference through all
$h$ factors, proving \eqref{eq:local-periodicity}. \qed

\begin{figure}[htb]
\centering
\begin{tikzpicture}[
  x=0.92cm,y=0.92cm,
  font=\small,
  axis/.style={draw=black!60,line width=0.4pt},
  move/.style={-{Stealth[length=1.8mm]},draw=black!65,line width=0.45pt},
  every node/.style={inner sep=2pt}
]
  \node[anchor=west,font=\small] at (-0.25,1.2)
    {(a) The periodic interval};
  \fill[black!5] (0,0) rectangle (12,0.55);
  \draw[axis] (0,0) rectangle (12,0.55);
  \foreach \j in {1,...,11} \draw[black!25,line width=0.3pt] (\j,0)--(\j,0.55);
  \foreach \j in {0,...,11} \node at ({\j+0.5},0.275) {$u$};
  \node[anchor=west] at (12.25,0.275) {$u^e$ in $x$};

  \filldraw[fill=black!7,draw=black!45,line width=0.4pt]
    (1,-1.25) rectangle (11,-0.7);
  \node at (6,-0.975) {$[-2R-\rho,\,2R+\rho]$};
  \node[anchor=west] at (12.25,-0.975) {read};

  \filldraw[fill=black!10,draw=black!45,line width=0.4pt]
    (2,-2.25) rectangle (10,-1.7);
  \node at (6,-1.975) {$[-2R,\,2R]$};
  \node[anchor=west] at (12.25,-1.975) {visited};

  \filldraw[fill=black!14,draw=black!45,line width=0.4pt]
    (4,-3.25) rectangle (8,-2.7);
  \node at (6,-2.975) {$[-R,\,R]$};
  \node[anchor=west] at (12.25,-2.975) {start};
  \draw[axis] (6,-3.25)--(6,-3.43);
  \node[anchor=north] at (6,-3.43) {$0$};

  \node[anchor=west,font=\small] at (-0.25,-4.55)
    {(b) A translation gives a terminal segment in each $S_r$};
  \node[anchor=east] at (1.15,-5.6) {$S_r$};
  \draw[axis] (1.7,-5.6)--(8.7,-5.6);
  \fill[black!9] (6,-7.87) rectangle (10.5,-7.09);
  \foreach \xx/\lab in {2/s_1,3.6/s_2,5.2/s_3,6.8/s_4,8.4/s_5} {
    \node[anchor=south] at (\xx,-5.4) {$\lab$};
    \draw[move] ({\xx+0.06},-5.76)--({\xx+0.94},-7.34);
  }
  \foreach \xx in {2,3.6}
    \draw[fill=white,line width=0.6pt] (\xx,-5.6) circle[radius=2.5pt];
  \foreach \xx in {5.2,6.8,8.4}
    \fill (\xx,-5.6) circle[radius=2.5pt];
  \node[anchor=west] at (10.4,-5.6) {$k_r=3$};
  \node[anchor=west] at (10.4,-6.5) {$n\mapsto n+d_r$};

  \node[anchor=east] at (1.15,-7.5) {$\pi_g^x(S_r)$};
  \draw[axis,-{Stealth[length=1.7mm]}] (1.7,-7.5)--(10.6,-7.5);
  \draw[dashed,black!65,line width=0.4pt] (6,-7.0)--(6,-7.93);
  \foreach \xx in {3,4.6}
    \draw[fill=white,line width=0.6pt] (\xx,-7.5) circle[radius=2.5pt];
  \foreach \xx in {6.2,7.8,9.4}
    \fill (\xx,-7.5) circle[radius=2.5pt];
  \node[anchor=north] at (6,-7.94) {$0$};
  \node[anchor=north] at (3.5,-7.94) {negative};
  \node[anchor=north] at (8.4,-7.94) {nonnegative};

  \node at (6.6,-8.85)
    {$S_r=\{s_1<\cdots<s_5\},\qquad
      (\pi_g^x)^{-1}(\mathbb N_0)\cap S_r=\{s_3,s_4,s_5\}.$};
  \node at (6.6,-9.6)
    {$\displaystyle (k_0,\ldots,k_{p-1})
       \ \text{determines }(\pi_g^x)^{-1}(\mathbb N_0)
       \ \text{and hence }H_xg.$};
\end{tikzpicture}
\caption{\footnotesize The nesting required by the proposition. Trajectories starting in $[-R,R]$ remain in $[-2R,2R]$ while a length-$h$ product is evaluated.
The cocycles $c_a$ are evaluated from a window extending a further distance $\rho$,
and all of this sits inside the periodic window.}
\label{fig:periodic}
\end{figure}

For each residue class $r\in\mathbb Z/p\mathbb Z$, define
\[
 S_r:=[-R,R]\cap(r+p\mathbb Z).
\]
It follows from \eqref{eq:local-periodicity}, that $\pi_g^x$ is a translation on $S_r$, where in general the translation parameter depends on $r$.
To appeal to Lemma \ref{lem:defect-dictionary}, we need to analyze the preimage of ${\mathbb N}_0$ under $\pi_g^x$. Because of the
uniform displacement bound  (\ref{eq:bounded-displacement}), only the integers in the interval $[-R,R]$ may have the property that the sign of $\pi_g^x(n)$ differs from that of $n$.

The preimage $(\pi_g^x)^{-1}(\mathbb N_0)\cap S_r$ is a terminal (right-most) segment of
$S_r$, Figure \ref{fig:periodic}, in particular it is determined by its cardinality.  By
\eqref{eq:bounded-displacement}, membership in
$(\pi_g^x)^{-1}(\mathbb N_0)$ agrees with membership in $\mathbb N_0$
outside $[-R,R]$.  It follows that these cardinalities---the ordered $p$-tuple of integers
\[
 \bigl(\,|(\pi_g^x)^{-1}(\mathbb N_0)\cap S_r|\,\bigr)_{r\in\mathbb Z/p\mathbb Z}
\]
determines the whole set $(\pi_g^x)^{-1}(\mathbb N_0)$, and hence the
coset $H_xg$ by Lemma~\ref{lem:defect-dictionary}.
The elements of $S_r$ are spaced by $p$, so $|S_r|\leq 2Dh/p+1$.
There are at most $2Dh/p+2$ choices for each cardinality and therefore at
most $(2Dh/p+2)^p$ cosets.
\end{proof}

Note that only the indicated finite block in $x$ is periodic in the proof above, there is no assumption that there is a periodic point in $X$.

\subsection{Two collision stages} \label{sec:two stages} Next we apply the periodic block analysis to formulate a bound on the collision rank.

\begin{proposition}\label{thm:power}
Suppose there are arbitrarily high powers of nonempty
words in the language $\mathcal L(X)$. Then every subgroup of $[[T]]$ has collision rank at
most two.
\end{proposition}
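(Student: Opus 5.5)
Let $G\le[[T]]$ be a subgroup and $A\subseteq G$ a label set. The plan is to apply Lemma~\ref{lem:coset-criterion} inside $G$ with the locally finite subgroup $H:=H_x\cap G$, where $x\in X$ is chosen using a long power in the language. Since $H\le H_x$ and $H_x$ is locally finite, $H$ is locally finite. It therefore suffices to produce $h\ge1$ and $x$ with $\#\{Hg:g\in A^h\}<3\cdot 2^{h-1}$. Cosets of $H$ in $G$ inject into cosets of $H_x$ in $[[T]]$: for $f,g\in G$, $H_xf=H_xg$ gives $fg^{-1}\in H_x\cap G=H$. So it is enough to bound $\#\{H_xg:g\in A^h\}$, and Proposition~\ref{prop:periodic-window} does exactly this.

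First fix $D,\rho$ for $A$ as in Section~\ref{sec:periodic-window-estimate}. The bound $(2Dh/p+2)^p$ depends on the period $p$, so $p$ must be fixed before $h$ is chosen. The hypothesis supplies high powers $u^e$, but with periods $|u|$ that may vary with $e$. To handle this, I would use the fact that $X$ is a subshift over a finite alphabet. There is some fixed $p$ for which the language contains $u^e$ with $|u|=p$ for arbitrarily large $e$, or else the periods grow with the exponent.

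In the first case, fix that $p$. Then $(2Dh/p+2)^p$ is polynomial in $h$ of degree $p$, so it is less than $3\cdot2^{h-1}$ for $h$ large. Choose $e$ with $ep\ge 4Dh+2\rho+1$. The word $u^e$ lies in $\mathcal L(X)$ and $X$ is shift invariant, so there is $x\in X$ with $x[-2R-\rho,2R+\rho]$ a subword of $u^e$. Hence $x$ is $p$-periodic there, and Proposition~\ref{prop:periodic-window} applies.

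The second case is the main obstacle, since the bound is exponential in $p$. I would try to exploit the requirement that the periodic block only has to have length about $4Dh+2\rho$. It then suffices to find, for a given $h$, a $p$-periodic block of that length with $p$ small compared with $h$, for instance $p\log(2Dh/p+2)<(h-1)\log 2$. Taking $p=\epsilon h$ with $\epsilon$ small relative to $D$ gives $(2D/\epsilon+2)^{\epsilon h}<2^{h-1}$ for large $h$. The block length is then about $(4D/\epsilon)p$, so the block needs exponent $e\approx 4D/\epsilon$, which is a fixed constant. Thus the real requirement is: for some fixed large exponent $E$, the words $u^E\in\mathcal L(X)$ exist with $|u|$ arbitrarily large but also hitting every scale. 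More precisely, for each large $h$ there must be such a $u$ with $|u|\le\epsilon h$ and $E|u|\ge 4Dh+2\rho$. This scale-filling is where the argument needs care.

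I would try to get scale-filling from the hypothesis that exponents are unbounded, as follows. Given $u^{e}$ with $e$ huge and some $h$ with $|u|\lesssim \epsilon h$, the word $u^e$ is itself periodic with period $|u|$ on a long interval. So every $h$ with $|u|\le\epsilon h$ and $e|u|\ge 4Dh+2\rho$ is covered by that one word. One word thus covers the range $h\in[|u|/\epsilon,\,e|u|/(4D)]$, roughly. For a fixed $A$ we need only one $h$, not all $h$. So picking any single power with $e>4D/\epsilon+1$ and $|u|$ large enough that $h:=\lceil|u|/\epsilon\rceil$ satisfies the inequality finishes the proof. Lemma~\ref{lem:coset-criterion} then gives $A\tarrow C\tarrow\{1\}$ in $G$, and hence $\crk(G)\le2$.
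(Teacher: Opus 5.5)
Your argument is correct and, once the case split and the scale-filling discussion are stripped away, it is the paper's proof: fix $K=1/\epsilon$ so that $2DK+2<2^K$, take $h=Kp$, and use a single power $u^e$ whose exponent exceeds a constant depending only on $D,K,\rho$, so that Proposition~\ref{prop:periodic-window} gives $(2DK+2)^p<2^{Kp}$. The paper does not need $|u|$ to be large, and so needs no case analysis (your first case cannot occur for minimal aperiodic $X$ anyway), because it takes $e>4DK+2\rho+1$ and compares with $3\cdot 2^{h-1}>2^h$; it handles subgroups by monotonicity of collision rank rather than by your equally valid direct use of $H_x\cap G$.
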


The main application in the next section will be to the commutator subgroup $[[T]]'$.

{\em Proof of Proposition \ref{thm:power}.}
Let $A\subseteq[[T]]$ be a label set.  Choose $D,\rho$ as above.
To explain the strategy, the goal is to show that the bound of Proposition \ref{prop:periodic-window} gives an inequality with the exponential number of leaves in a tree of a suitable height $h$. We will take $h=Kp$, so the coset bound in
Proposition~\ref{prop:periodic-window} becomes 
\begin{equation} \label{eq: exponent p} (2DK+2)^p, \quad 
\text{ while } 2^h=(2^K)^p.
\end{equation}Taking the logarithm of both expressions motivates the first step: consider an integer $K$ large enough so that
\begin{equation}\label{eq:log}
 2DK+2<2^K.
\end{equation}
This choice of $K$ is independent of the period $p$ defined next.
By assumption, there is a power $u^e\in\mathcal L(X)$ of a nonempty word $u$ and
$e>4DK+2\rho+1$. As explained below, $e$ is chosen to ensure the periodic interval
required by Proposition~\ref{prop:periodic-window}. Set $p:=|u|$ and $h:=Kp$.
The block $u^e$ has length
\begin{equation} \label{eq: inequality p}
 pe>4Dh+2\rho+1.
\end{equation}
Shifting an element of $X$ containing this block if necessary, we get $x\in X$
which is $p$-periodic throughout the interval $[-2Dh-\rho,2Dh+\rho]$.
Proposition~\ref{prop:periodic-window} gives
\[
 \#\{H_xg:g\in A^h\}
 \leq(2DK+2)^p<2^{Kp}=2^h<3\cdot2^{h-1}.
\]
Since $H_x$ is locally finite, Lemma~\ref{lem:coset-criterion} gives the
two collision stages
\[
 A\tarrow C\tarrow\{1\},\qquad
 C=H_x\cap A^h(A^h)^{-1}.
\]
This proves $\crk([[T]])\leq2$; the claim about subgroups follows
from subgroup monotonicity of collision rank, formally stated as Proposition \ref{prop:basic properties}(1).
\qed
\begin{remark}
Note that choosing larger powers $u^e$ may require larger period $p$. Perhaps counterintuitively, this does not affect the proof since $p$ enters the proof in two ways: (1) as the exponent of both expressions in (\ref{eq: exponent p}), so taking log makes the inequality (\ref{eq:log}) independent of $p$, and also (2) on both sides of the inequality (\ref{eq: inequality p}) which reads $pe>4DKp+2\rho+1$. Dividing it by $p$ gives \[e>4DK+\frac{2\rho+1}{p},\]
which is satisfied for any $p\geq 1$ because of the choice $e>4DK+2\rho+1$.
\end{remark}

\section{Sturmian examples}\label{sec:examples}

We first combine the estimate of Section~\ref{sec:dynamics} with the
known properties of topological full groups to state a general theorem. After the proof of this result we present a family of Sturmian groups satisfying the required periodicity condition. 

\begin{theorem}\label{thm:infinite}
Let $(X,T)$ be a minimal aperiodic subshift over a finite alphabet.
Suppose that for every $e\geq1$ there is a nonempty word $u$ with
$u^e\in\mathcal L(X)$. Then every subgroup of $[[T]]$ has 
collision rank at most two and is good. Moreover, if
\[
 [[T]]'\leq\Lambda\leq[[T]],
\]
then $\Lambda$ is infinite and amenable,
$\Lambda\notin\SG$, and $\crk(\Lambda)=2$.
In particular, the commutator subgroup $G=[[T]]'$ is an infinite finitely generated simple
amenable good group outside $\SG$. It is not finitely presented.
\end{theorem}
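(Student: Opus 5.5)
The plan is to assemble Theorem~\ref{thm:infinite} from Proposition~\ref{thm:power}, Theorem~\ref{thm:main}, and the cited structural results on topological full groups.

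\emph{Upper bound and goodness.} The hypothesis that $u^e\in\mathcal L(X)$ for every $e$ is exactly the assumption of Proposition~\ref{thm:power}. That proposition, together with subgroup monotonicity, gives $\crk(\Lambda')\le 2$ for every subgroup $\Lambda'\le[[T]]$. Theorem~\ref{thm:main} then shows that every such subgroup is good.

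\emph{Structure of $\Lambda$ with $[[T]]'\le\Lambda\le[[T]]$.}
\begin{itemize}
\item Amenability: $[[T]]$ is amenable by Juschenko--Monod \cite{JM}, since $(X,T)$ is a minimal Cantor system. Subgroups of amenable groups are amenable, so $\Lambda$ is amenable.
\item Infinite: by Matui \cite{Matui}, $[[T]]'$ is simple and infinite for minimal subshifts, hence so is $\Lambda$. The same reference gives finite generation of $[[T]]'$ and that $[[T]]'$ is not finitely presented (\cite[Theorem 5.7]{Matui}).
\item Exclusion from $\SG$: Grigorchuk--Medynets \cite{GM} show $[[T]]'\notin\SG$. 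Since $\SG$ is closed under subgroups, $\Lambda\notin\SG$.
\end{itemize}
I would check that the cited statements hold for all minimal aperiodic subshifts, and not only Sturmian ones. For $\SG$ I recall the argument uses that an infinite simple finitely generated group of exponential growth lies outside $\SG$, and that $[[T]]'$ has exponential growth by Matui. If needed, I would cite it in that form.

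\emph{Lower bound $\crk(\Lambda)\ge 2$.} Lemma~\ref{lem:free-monoid} gives this once $\Lambda$ contains a free submonoid of rank three. Rank two suffices: if $a,b$ generate a free monoid, then $a^2,ab,b$ (or $aa,ab,ba$) generate a free submonoid of rank three.

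So the task reduces to exhibiting a free submonoid in $\Lambda$. Here I would use that $[[T]]'$ is finitely generated with exponential growth. By Rosenblatt, a finitely generated elementary amenable group of exponential growth contains a free subsemigroup, but $\Lambda$ is not elementary amenable. Instead, I would argue via supramenability. By Proposition~\ref{lem:supramenability}, $\crk\le1$ means supramenable, and supramenable groups contain no free subsemigroup on two generators. Also, a finitely generated group of exponential growth need not contain one (there are torsion examples). So I would construct the free submonoid directly, using ping-pong on the Cantor set. Take clopen sets and elements of $[[T]]'$ acting as partial transformations with disjoint-image dynamics, for example $3$-cycles of clopen sets built from first return maps, giving $a,b$ with $aU,bU\subseteq U$ and $aU\cap bU=\emptyset$. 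Then $a,b$ generate a free semigroup. Alternatively, I would cite that $[[T]]$ is not supramenable (\cite{GM} and the literature on Lamplighter-type subgroups). Note that $\mathbb Z\wr\mathbb Z$-type or $\mathbb Z_2\wr\mathbb Z$ subgroups embed in $[[T]]'$, and $\mathbb Z_2\wr\mathbb Z$ contains a free submonoid of rank two. Since $\mathbb Z_2\wr\mathbb Z$ embeds in topological full groups of minimal subshifts (lamplighter embeddings in \cite{GM}, or elementary constructions), this is the cleanest route. I expect that locating a lamplighter subgroup, or some free submonoid, inside $[[T]]'$ rather than merely inside $[[T]]$ is the main obstacle; passing to commutators such as $[s,t]$ of lamplighter elements should handle it.

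Combining the two bounds gives $\crk(\Lambda)=2$.
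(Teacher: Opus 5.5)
\textbf{Summary.} Everything except the lower bound matches the paper's proof, but the lower bound $\crk(\Lambda)\ge 2$ has a genuine gap at exactly the step you flag yourself.

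\textbf{What matches.} You use Proposition~\ref{thm:power} with subgroup monotonicity for $\crk\le 2$, and Theorem~\ref{thm:main} for goodness. You cite \cite{JM,Matui,GM} for amenability, simplicity, finite generation, non-finite presentation and $\Lambda\notin\SG$. Your reduction from rank two to rank three via $\{a^2,ab,b\}$ or $\{aa,ab,ba\}$ is also correct: both are codes, like the paper's $\{t^2,t(at),(at)^2\}$.

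\textbf{The gap.} You never exhibit a free submonoid inside $[[T]]'$, and none of the routes you suggest works.
\begin{enumerate}
\item Lower bounds do not pass to subgroups: monotonicity gives $\crk(H)\le\crk(G)$. So non-supramenability of $[[T]]$, or a lamplighter merely in $[[T]]$, says nothing about $[[T]]'$. For example, the standard lamplighter built from $T$ itself fails, because $g\mapsto\int c_g\,d\mu$ is a homomorphism to $\mathbb R$ sending $T$ to $1$, so $T\notin[[T]]'$.
\item Ping-pong on clopen sets is impossible in $[[T]]$. Each $g\in[[T]]$ acts piecewise by powers of $T$, so it preserves every $T$-invariant probability measure $\mu$. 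By minimality $\mu$ has full support. Then $aU,bU\subseteq U$ with $aU\cap bU=\emptyset$ gives $2\mu(U)\le\mu(U)$, forcing $U=\emptyset$. Any ping-pong must take place on null sets such as orbits, which is how the lamplighter works.
\item Commutators of lamplighter elements lie in $[L,L]\subseteq\bigoplus_{\Z}C_2$ for $L=C_2\wr\Z$. That group is locally finite, so it contains no free submonoid.
\item Your assertion that $\Z\wr\Z$ or $C_2\wr\Z$ embeds in $[[T]]'$ is given without a source, and you then treat it as the open obstacle.
\end{enumerate}

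\textbf{How the paper closes it.} The paper cites \cite[Theorem~2.4]{Matui13}. An infinite minimal subshift is expansive, hence not conjugate to an odometer, so $[[T]]'$ itself contains $C_2\wr\Z$. There $t$ and $at$ generate a free monoid of rank two.

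\textbf{A repaired version of your commutator idea.} Suppose you have a lamplighter $\langle a,t\rangle\le[[T]]$ supported in a clopen set $U$, and some $h\in[[T]]$ with $hU\cap U=\emptyset$. Write $[g,h]=ghg^{-1}h^{-1}$.
\begin{itemize}
\item The elements $[t,h]$ and $[at,h]$ lie in $[[T]]'$, preserve $U$, and restrict on $U$ to $t$ and $at$.
\item Restriction to $U$ is a homomorphism on the stabilizer of $U$.
\item Free generation of a monoid lifts along homomorphisms, so $[t,h]$ and $[at,h]$ generate a free monoid in $[[T]]'$.
\end{itemize}
You would still have to justify the existence of such a localized lamplighter and displacing element $h$. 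Citing Matui, as the paper does, avoids this.
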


\begin{proof}
Proposition~\ref{thm:power} establishes an upper bound on the rank, and
Theorem~\ref{thm:main} gives the good group implication.

Next we discuss the lower bound. An infinite minimal subshift is expansive and cannot be conjugate to an
odometer, cf. \cite[Section 4.1]{FW}, and therefore by \cite[Theorem 2.4]{Matui13}, $[[T]]'$ contains the lamplighter
group $C_2\wr\Z$. It is a well-known fact that this group contains a free submonoid of rank $2$, and hence of any rank. 
In more detail, let $a$ toggle the lamp
at the origin and let $t$ be the shift in the lamplighter group. Then the elements $t,at$ freely
generate a rank $2$ free monoid, and
the three words $t^2, t(at), (at)^2$ 
generate a free submonoid of rank three. By
Lemma~\ref{lem:free-monoid}, $\crk(G)\geq2$, and by subgroup
monotonicity, $\crk(\Lambda)=2$ for every  $\Lambda$ in the statement of the theorem.

The rest of the statements follow from results in the literature, as follows. The system $(X,T)$ is Cantor minimal. Simplicity
and finite generation of $G$ follow from \cite[Theorems 4.9 and 5.4]{Matui}, and
amenability of $[[T]]$ is given by \cite[Theorem A]{JM}.
The lamplighter subgroup makes $G$ infinite and of exponential growth.
The fact that $G\notin\SG$ is proved in
\cite[Proposition 2.4]{GM}, and $G$ is not finitely presented by
\cite[Theorem 5.7]{Matui}. Subgroups of amenable groups are amenable, and if $\Lambda$ were an element of $\SG$ than so would $G$. This completes the proof of the theorem.
\end{proof}

\subsection{Sturmian groups}\label{sec:sturmian}

Given an irrational $\alpha\in(0,1)$, let
\[
 x_k=\lfloor(k+2)\alpha\rfloor-\lfloor(k+1)\alpha\rfloor,
 \qquad k\in\Z.
\]
The Sturmian subshift
of slope $\alpha$ is a classical construction \cite{MH} given by the orbit closure $X_\alpha$ of $x$, with left shift $T_\alpha$; it is minimal and
aperiodic. The symbolic sequence in $\{ 0,1\}$ associated with $\alpha$ may be read off geometrically as follows: consider the line in ${\mathbb R}^2$ of slope $\alpha/(1-\alpha)$ and record consecutive intersections with vertical and horizontal lines of the integer lattice ${\mathbb Z}^2$ as $0$, respectively $1$. The study of such sequences dates back to Johann III Bernoulli in 1772.

Consider the continued faction expansion $\alpha=[0;a_1,a_2,\ldots]$.
Following the terminology in Sturmian systems, the {\em standard words} are
\[
 s_{-1}=1,\qquad s_0=0,\qquad s_1=0^{a_1-1}1,
 \qquad s_{n+1}=s_n^{a_{n+1}}s_{n-1}\quad(n\geq1).
\]
They occur in the language $\mathcal L(X_\alpha)$, and in particular
\begin{equation}\label{eq:standard powers}
 s_n^{a_{n+1}}\in\mathcal L(X_\alpha)
 \qquad(n\geq1);
\end{equation}
see \cite[Chapter 2]{L}. 

When the continued fraction coefficients are unbounded, Equation (\ref{eq:standard powers}) gives the condition on the unbounded powers $u^e$ in the statement of Theorem\ref{thm:infinite}, giving the following consequence.

\begin{corollary}\label{cor:sturmian}
If the partial quotients $a_i$ of $\alpha=[0;a_1,a_2,\ldots]$ are unbounded, then
$G_\alpha=[[T_\alpha]]'$ satisfies all the conclusions of
Theorem \ref{thm:infinite}. In particular, $G_\alpha$ is good,
$G_\alpha\notin\SG$, and $\crk(G_\alpha)=2$.
\end{corollary}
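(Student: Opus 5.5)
The plan is to reduce Corollary~\ref{cor:sturmian} to a direct application of Theorem~\ref{thm:infinite} with $(X,T)=(X_\alpha,T_\alpha)$ and $\Lambda=G_\alpha$. This requires checking two hypotheses. First, $(X_\alpha,T_\alpha)$ must be a minimal aperiodic subshift over the finite alphabet $\{0,1\}$. Second, for every $e\geq1$ there must be a nonempty word $u$ with $u^e\in\mathcal L(X_\alpha)$. The first hypothesis is classical and was recalled above: the orbit closure of the Sturmian sequence of irrational slope is minimal and aperiodic \cite{MH}. So the substance lies in the second hypothesis.

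For the second hypothesis, fix $e\geq1$. Since the partial quotients are unbounded, there is some $n\geq1$ with $a_{n+1}\geq e$. By \eqref{eq:standard powers}, $s_n^{a_{n+1}}\in\mathcal L(X_\alpha)$. The language of a subshift is factorial, meaning closed under taking subwords, so the prefix $s_n^{e}$ also lies in $\mathcal L(X_\alpha)$. The word $u=s_n$ is nonempty: by induction on the recursion $s_{n+1}=s_n^{a_{n+1}}s_{n-1}$ starting from $s_0=0$ and $s_1=0^{a_1-1}1$, every $s_n$ with $n\geq 0$ is nonempty, and in fact its length grows. This gives arbitrarily high powers in the language.

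With both hypotheses verified, Theorem~\ref{thm:infinite} applies with $\Lambda=[[T_\alpha]]'=G_\alpha$. It yields that $G_\alpha$ is infinite, finitely generated, simple, amenable and good, that $G_\alpha\notin\SG$, and that $\crk(G_\alpha)=2$.

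The only step that needs any care is the standard-word fact \eqref{eq:standard powers}, together with the index conventions relating $n$ to $a_{n+1}$. That fact is quoted from \cite[Chapter 2]{L}, so I would simply cite it. If the conventions needed checking, I would use that $s_{n+1}$ is a prefix of the characteristic word $c_\alpha$, and that $c_\alpha$ lies in $X_\alpha$ up to the standard identification of the two codings. Then $s_n^{a_{n+1}}$, being a prefix of $s_{n+1}$, is a factor of a point of $X_\alpha$.
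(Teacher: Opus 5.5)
Your proposal is correct and is essentially the paper's argument: the paper likewise notes that $(X_\alpha,T_\alpha)$ is a minimal aperiodic subshift, uses \eqref{eq:standard powers} with unbounded $a_{n+1}$ to get arbitrarily high powers in $\mathcal L(X_\alpha)$, and applies Theorem~\ref{thm:infinite} with $\Lambda=[[T_\alpha]]'$. Your extra steps, namely factoriality to pass from $s_n^{a_{n+1}}$ to $s_n^{e}$ and the nonemptiness of $s_n$, make explicit details that the paper leaves implicit.
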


For example, for $\alpha=[0;2,3,4,5,\ldots]$, the words $s_n^{n+2}$ occur in
the language, giving an instance satisfying Corollary \ref{cor:sturmian}.

\begin{remark}
The power condition can equivalently be stated as $\CE(X)=\infty$,
where the {\em critical exponent} is defined as
\[
 \CE(X)=\sup_{\varnothing\ne v\in\mathcal L(X)}
             \frac{|v|}{\operatorname{per}(v)}
\]
and $\operatorname{per}(v)$ is the least period of $v$.
For Sturmian words, infinite critical exponent is equivalent to unbounded
partial quotients \cite{Mignosi}. Such slopes have full
Lebesgue measure in $(0,1)$ \cite{Khinchin}.
\end{remark}

\subsection{Continuum many groups} The following lemma completes the proof of Theorem \ref{thm:main-intro}.

\begin{lemma}\label{thm:continuum}
There are $2^{\aleph_0}$ pairwise nonisomorphic finitely generated
infinite simple amenable good groups outside $\SG$ with
collision rank two.
\end{lemma}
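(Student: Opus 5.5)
By Corollary~\ref{cor:sturmian}, every irrational $\alpha\in(0,1)$ with unbounded partial quotients gives a group $G_\alpha=[[T_\alpha]]'$ with all the required properties: finitely generated, infinite, simple, amenable, good, outside $\SG$, and of collision rank two. So the lemma reduces to a cardinality count. I will show that the set $U$ of such slopes has cardinality $2^{\aleph_0}$, and that the map $\alpha\mapsto G_\alpha$ has countable fibres up to isomorphism. Then uncountably many isomorphism classes occur. In fact a continuum of them occur, since the fibres are countable and $U$ has cardinality $2^{\aleph_0}$, while the number of finitely generated groups is at most $2^{\aleph_0}$.

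For the size of $U$, there are two routes. One is the remark after Corollary~\ref{cor:sturmian}: $U$ has full Lebesgue measure, hence cardinality $2^{\aleph_0}$. More concretely, for each infinite $0/1$ sequence $\varepsilon$, set $a_n=2n+\varepsilon_n$. These partial quotients are unbounded. Distinct sequences give distinct continued fractions, and hence distinct irrationals $\alpha_\varepsilon$, because continued fraction expansions of irrationals are unique.

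For the countable fibres, I will invoke the isomorphism rigidity of derived topological full groups of Cantor minimal systems. This is Matui's theorem \cite{Matui}, building on Giordano--Putnam--Skau \cite{GPS}: $[[T]]'\cong[[S]]'$ if and only if $(X,T)$ is flip conjugate to $(Y,S)$. For Sturmian systems, flip conjugacy of $X_\alpha$ and $X_\beta$ forces the rotation numbers to be related by $\beta\in\{\alpha,1-\alpha\}$ modulo the $GL_2(\Z)$-action. A clean invariant is the dimension group $K^0(X_\alpha,T_\alpha)\cong\Z+\alpha\Z\subset\mathbb R$ with its order. Conjugacy preserves this ordered group up to scaling, so $\beta$ lies in the $GL_2(\Z)$-orbit of $\alpha$, which is countable. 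Thus each isomorphism class of $G_\alpha$ contains at most countably many $\alpha\in U$.

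The main obstacle is this rigidity step: correctly citing the statement that an abstract group isomorphism of the commutator subgroups yields flip conjugacy, and extracting from that an invariant with countable fibres. If I want to avoid $K$-theory, I can instead note that flip conjugacy preserves the frequency of the symbol $1$, which is $\alpha$, up to the recoding ambiguity. Any topological conjugacy between subshifts is given by a sliding block code, and there are only countably many block codes. Hence, for fixed $\alpha$, only countably many $\beta$ can have $X_\beta$ conjugate or flip conjugate to $X_\alpha$. This gives the countable fibres directly and completes the proof.
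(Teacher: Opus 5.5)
Your proof is correct, but it reaches the conclusion by a different route from the paper.

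The paper gives an explicit family: slopes $[0;a_1,a_2,\ldots]$ with $a_n\in\{n+1,n+2\}$, all lying in $(0,1/2)$. It then combines two facts:
\begin{enumerate}
\item distinct Sturmian slopes in $(0,1/2)$ are never flip conjugate \cite[p.~186]{dC};
\item the rigidity theorem \cite[Theorem~5.13]{BM}.
\end{enumerate}
Together these show that distinct slopes in that family already give nonisomorphic groups $G_\alpha$.

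You use only the rigidity theorem, plus the soft fact that each flip-conjugacy class contains at most countably many slopes, and finish with cardinal arithmetic. Your sliding block code argument for the countable fibres is sound and elementary. A conjugacy from $(X_\alpha,T_\alpha^{\pm1})$ onto $(X_\beta,T_\beta)$ is a block code on $X_\alpha$ or on its reversal, so $X_\beta$ is one of countably many images. Moreover, $X_\beta$ determines $\beta$ as the frequency of the symbol $1$.

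What your route buys is independence from the precise classification of Sturmian systems. What it costs is explicitness: you obtain $2^{\aleph_0}$ pairwise nonisomorphic groups only by choosing representatives of isomorphism classes. The paper instead shows directly that its parametrization is injective up to isomorphism. In fact your family $a_n=2n+\varepsilon_n$ has $a_1\geq2$, so it also lies in $(0,1/2)$, and the paper's argument applies to it verbatim.

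Three small points:
\begin{itemize}
\item Your intermediate claim that flip conjugacy forces $\beta\in\{\alpha,1-\alpha\}$ ``modulo the $GL_2(\Z)$-action'' is muddled. Flip conjugacy forces exactly $\beta\in\{\alpha,1-\alpha\}$, since the eigenvalue group $\Z+\alpha\Z$ modulo $1$ must be preserved. That is the fact the paper uses, though the weaker $GL_2(\Z)$ statement suffices for your countability argument.
\item The paper attributes the rigidity theorem for $[[T]]'$ to \cite[Theorem~5.13]{BM}.
\item The bound on the number of finitely generated groups is unnecessary, since the number of isomorphism classes is trivially at most $|U|$.
\end{itemize}
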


\begin{proof}
Consider the slopes
\[
 \mathcal S=\{[0;a_1,a_2,\ldots]:a_n\in\{n+1,n+2\}\}.
\]
Distinct infinite choices give distinct irrational numbers, so
$|\mathcal S|=2^{\aleph_0}$. Every slope in $\mathcal S$ has
unbounded partial quotients and satisfies Corollary~\ref{cor:sturmian}.

All slopes in $\mathcal S$ lie in $(0,1/2)$.
Sturmian systems with distinct slopes in this interval are not
conjugate, even after replacing one of the shifts by its inverse;
see \cite[p. 186]{dC}.
Then by \cite[Theorem~5.13]{BM}, the groups
$G_\alpha$, $\alpha\in\mathcal S$, are pairwise
nonisomorphic.
\end{proof}

\section{Collision rank properties}\label{sec:free}

This section establishes some basic properties of collision rank. This includes its analysis under extensions, subgroups and quotients, characterization of $\crk\leq 1$ as supramenability, and the calculation of collision rank of free nonamenable groups. Throughout this section, it will convenient to refer to the tree $T_h$ in Definition \ref
{def:stage} as a {\em witness} of $A\tarrow B$.

Recall that a group is called {\em locally subexponential} if all its finitely
generated subgroups have subexponential growth.

\begin{proposition}\label{prop:basic properties}
Collision rank has the following properties.
\begin{enumerate}
\item If $H\leq G$, then $\crk(H)\leq\crk(G)$.
\item If $G\twoheadrightarrow Q$, then $\crk(Q)\leq\crk(G)$.
\item If $1\to N\to G\to Q\to1$ is exact, then
\[
 \crk(G)\leq\crk(N)+\crk(Q).
\]
\item If $G$ is subexponential or more generally locally subexponential, then $\crk(G)\leq1$.
\end{enumerate}

In particular, every nontrivial abelian group has rank one, and a
solvable group of derived length $d$ has rank at most $d$.

\end{proposition}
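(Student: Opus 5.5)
Each part is proved by transporting witness trees between label sets. Two observations are used throughout: the relation $A\tarrow B$ is preserved when $A$ is shrunk or $B$ is enlarged, and a $\tarrow$-chain for $A_0$ may be padded with trailing copies of $\{1\}$, since $\{1\}\tarrow\{1\}$ holds trivially.

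\textbf{Subgroups.} For (1), let $A_0\subseteq H$ be a label set. Take a chain $A_0\tarrow A_1\tarrow\cdots\tarrow A_r=\{1\}$ in $G$ and intersect every term with $H$; each $A_i\cap H$ is again a label set. To see $A_i\cap H\tarrow A_{i+1}\cap H$, use the witness tree of $A_i\tarrow A_{i+1}$. Any labeling by $A_i\cap H$ is a labeling by $A_i$, so it has incomparable $v,w$ with $p(v)p(w)^{-1}\in A_{i+1}$. Both $p(v)$ and $p(w)$ lie in $H$, so the difference lies in $A_{i+1}\cap H$.

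\textbf{Quotients.} For (2), let $\phi\colon G\to Q$ be the surjection and $A_0\subseteq Q$ a label set. Choose a finite symmetric lift $\tilde A_0\ni1$ with $\phi(\tilde A_0)=A_0$, take a chain for $\tilde A_0$ in $G$, and push it forward to $\phi(\tilde A_i)$. Given a $Q$-labeling of the witness tree, lift each edge label to $\tilde A_i$. A collision upstairs maps to a collision downstairs, because $\phi$ is a homomorphism and $p$ commutes with $\phi$.

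\textbf{Extensions.} Part (3) is the main obstacle. Let $q=\crk(Q)$ and $n=\crk(N)$, and let $A_0\subseteq G$ be a label set. First push $A_0$ to $Q$ and take a $Q$-chain $\bar A_0\tarrow\cdots\tarrow\bar A_q=\{1\}$. Lift it stage by stage: set $A_{i+1}$ to be the set of all differences $p(v)p(w)^{-1}$, over incomparable $v,w$, arising from $A_i$-labelings of the $i$-th witness tree of height $h_i$. This set is finite (at most $|A_i^{h_i}|^2$ elements), symmetric and contains $1$. Its image in $Q$ is not contained in $\bar A_{i+1}$, so restrict instead to those differences that map into $\bar A_{i+1}$. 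This restricted set is still finite, symmetric and contains $1$, and it will serve as $A_{i+1}$.

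One must still check that $\phi(A_{i+1})\subseteq\bar A_{i+1}$ and that $A_{i+1}$ works as a collision target for every $A_i$-labeling. Apply the $Q$-witness to the pushed-forward labeling: it produces incomparable vertices whose upstairs difference maps into $\bar A_{i+1}$, so that difference lies in $A_{i+1}$ by construction. After $q$ steps, $A_q$ is a finite symmetric subset of $N$ containing $1$. Then $\crk(N)=n$ supplies a chain of length $n$ in $N$, which is also a chain in $G$. The total length is $q+n$.

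\textbf{Subexponential growth.} For (4), let $A$ be a label set and $S=\langle A\rangle$, which has subexponential growth. Then $|A^h|\le|B_S(h)|$, which grows subexponentially, and is therefore eventually less than $3\cdot 2^{h-1}$. Two leaves of $T_h$ then carry equal products, and leaves are pairwise incomparable, so $A\tarrow\{1\}$. Hence $\crk(G)\le1$.

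\textbf{Consequences.} A nontrivial abelian group is locally subexponential, so its rank is at most $1$, and it is not $0$ because the group is nontrivial. For the solvable bound, induct on the derived series using (3) with abelian factors; this gives rank at most $d$.
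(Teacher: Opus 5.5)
Your proposal is correct and follows the paper's proof essentially step for step. You intersect the chain with $H$ for subgroups and lift $A_0$ then project the chain for quotients. For extensions you lift the $Q$-chain by keeping exactly those differences $p(v)p(w)^{-1}$, coming from $A_i$-labelings of the same witness tree, whose image lies in $\bar A_{i+1}$, and then append a chain in $N$. For (4) you compare $|A^h|$ with the $3\cdot 2^{h-1}$ leaves by pigeonhole. Your remarks on padding chains with $\{1\}$ and on why a nontrivial abelian group has rank exactly one fill in small details the paper leaves implicit.
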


\begin{proof}
(1) For a label set $A_0$ in $H$, take a chain
\[
 A_0\tarrow A_1\tarrow\cdots\tarrow A_{\crk(G)}=\{1\}.
\]
in $G$ and intersect every
intermediate set $A_i$ with $H$. All path products then lie in $H$, so the
same trees give a chain in $H$. The intersections remain
symmetric and contain $1$.

(2) For a quotient $q:G\twoheadrightarrow Q$, consider a lift of an initial label set $A_0$ 
in $Q$ to a finite symmetric set in $G$ containing $1$, then take a chain in
$G$, and project it back to $Q$. Each labeling in $Q$ lifts to a labeling in $G$,
so the projected sets and the same trees give a chain in $Q$.

(3) For the extension claim, let $q$ denote the homomorphism $G\to Q$ and suppose $r=\crk(Q)$ and $s=\crk(N)$ are finite. Given a label set $A_0\subseteq G$, consider
a chain
\[
 q(A_0)=\overline A_0\tarrow\overline A_1\tarrow\cdots
 \tarrow\overline A_r=\{1\}
\]
in $Q$. The chain is lifted inductively, as follows. Suppose $A_0\tarrow\ldots\tarrow  A_i$ has been constructed. Starting with 
$q(A_i)\subseteq\overline A_i$ and $T_i$ witnessing
$\overline A_i\tarrow\overline A_{i+1}$, let $A_{i+1}$ consist of
all differences $p(v)\cdot p(w)^{-1}$ from $A_i$-labelings of the same tree $T_i$ and
incomparable pairs whose projected difference lies in
$\overline A_{i+1}$. This is a finite symmetric set: reversing a pair
inverts its difference, and $\overline A_{i+1}$ is symmetric. The
constant labeling by $1$ shows that $1\in A_{i+1}$. Thus
\[
 A_i\tarrow A_{i+1},\qquad
 q(A_{i+1})\subseteq\overline A_{i+1}.
\]
After $r$ steps, we get $A_r\subseteq N$. A chain for $G$ is obtained by appending an $s$-step chain in $N$.

(4) Finally, if $G$ is locally subexponential and $A\subseteq G$ is a
label set, then $|A^h|<3\cdot2^{h-1}$ for all sufficiently large
$h$. Two leaves of any $A$-labeling of $T_h$ have equal products, so
$A\tarrow\{1\}$. The assertion for solvable groups follows by
induction using the extension inequality.
\end{proof}

Recall that a group is called  \emph{supramenable} \cite{R} if none of its nonempty subsets is
paradoxical: no subset contains two disjoint copies of itself
obtained by finite partitions and left translations. For example, a group containing a free monoid $S$ of rank $2$ is not supramenable: denoting free monoid generators by $x$ and $y$, $S$ contains two disjoint translated copies $xS, yS$ of itself. Subexponential groups are known to be supramenable \cite{R} while the reverse implication is an open problem.

\begin{proposition}\label{lem:supramenability}
A group $G$ is supramenable if and only if $\crk(G)\leq1$.
\end{proposition}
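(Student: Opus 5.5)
The plan is to pass in both directions through the standard Hall-type reformulation of paradoxicality, as in Rosenblatt \cite{R}. A nonempty set $E\subseteq G$ is paradoxical if and only if there are a finite set $F\subseteq G$ and a map $\psi\colon E\to E$ with $\psi(y)\in F^{-1}y$ and $|\psi^{-1}(x)|\ge2$ for every $x\in E$. Equivalently, by the Hall marriage theorem for locally finite bipartite graphs, $|FS\cap E|\ge 2|S|$ for every finite $S\subseteq E$. The trivial group has rank $0$ and is supramenable, so we assume $G\neq1$. Then $\crk(G)\le1$ means exactly that $A\tarrow\{1\}$ for every label set $A$. In other words, every $A$-labeling of $T_h$, for $h$ large, has two incomparable vertices with $p(v)=p(w)$.

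\emph{Non-supramenable $\Rightarrow$ $\crk(G)\ge2$.} Take a paradoxical $E$ with $\psi,F$ as above, and replace $\psi$ by $\psi^2$ so that each point has at least $4\ge3$ preimages (with $F$ replaced by $F^2$). Let $A=F\cup F^{-1}\cup\{1\}$.

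First choose a non-periodic basepoint $x_0$. If $x\in E$ lies on a $\psi$-cycle, it has a preimage $x_1$ off the cycle. Any point whose forward orbit reaches $x_1$ cannot itself be periodic, since otherwise $x_1$ would lie on that point's cycle and hence on the cycle of $x$.

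Next build the labeling from backward orbits. Label the three root edges so that the three children have products equal to three distinct $\psi$-preimages $y$ of $x_0$, using the labels $y x_0^{-1}\in F$; this is compatible with the convention $p(v)=a_k\cdots a_1$ after right-translating by $x_0$. At each later vertex, use two distinct preimages. Now suppose incomparable $v,w$ had equal products $y$ (translated). If they have equal depth, applying $\psi$ repeatedly reaches their last common ancestor, whose two distinct children would then have to coincide, a contradiction. If their depths differ, then $x_0=\psi^{d(v)}(y)=\psi^{d(w)}(y)$ makes $x_0$ periodic, again a contradiction. Hence $A\not\tarrow\{1\}$. This is the mechanism of Lemma~\ref{lem:free-monoid}, with the free monoid replaced by the backward orbit tree of $\psi$.

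\emph{$\crk(G)\ge2$ $\Rightarrow$ non-supramenable.} Suppose some label set $A$ has $A\not\tarrow\{1\}$. For each $h$ there is then a labeling of $T_h$ in which incomparable vertices have distinct products. By K\"onig's lemma, since labelings of $T_h$ are finitely many and restrict compatibly, we obtain a labeling of the infinite tree $T_\infty$ with the same property. Set $E=p(V)$, which contains $1=p(\text{root})$, and $F=A$. We aim to verify Hall's condition $|AS\cap E|\ge 2|S|$. For finite $S\subseteq E$, choose representatives $v_x$ with $p(v_x)=x$ forming an \emph{antichain}. The $2|S|$ children of these vertices are then pairwise incomparable, so their products are $2|S|$ distinct elements of $AS\cap E$.

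The main obstacle is choosing the antichain, since occurrences of $x$ may all be comparable to occurrences of other elements. My first attempt is to choose each $v_x$ as a deepest occurrence of $x$ within a large truncation $T_N$ and argue minimality. Failing that, I would restrict $E$ to products of vertices at a single depth, or in a union of sparse levels, where distinctness of incomparable products makes $p$ injective on each level. I would then run the Hall argument level-to-level, building $\psi$ from level $n+1$ to level $n$ by the parent map, and assemble a paradoxical decomposition of $\bigcup_n p(\text{level }n)$ by a compactness argument on finite truncations.
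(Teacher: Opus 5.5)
Your first direction (non-supramenable $\Rightarrow \crk(G)\ge 2$) is correct, and it takes a different route from the paper. You build the backward-orbit tree of the bounded, at-least-$4$-to-$1$ map $\psi^2$ from a non-periodic basepoint, so the product at each vertex is the point of $E$ placed there, right-translated by $x_0^{-1}$. Equal products at incomparable vertices are then excluded by your common-ancestor and periodicity arguments. The paper instead restricts an injective Lipschitz map $F_2\to G$, taken from its imported characterization of non-supramenability, to a trivalent subtree. Your version needs only the elementary bounded-map description of paradoxical sets.

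The converse, however, is not proved. The antichain of representatives you need does not exist in general. In the labeling of Lemma~\ref{lem:free-monoid}, every element of $E=p(V)$ occurs at exactly one vertex. So for $S=\{p(v),p(u)\}$ with $u$ a child of $v$, no antichain represents $S$, and ``deepest occurrences'' fail for the same reason. The level-by-level fallback does not close the gap either. Products at different depths can coincide, since comparable vertices may carry equal products. Hence the parent maps between levels do not glue to a well-defined $\psi$ on $\bigcup_n p(\text{level } n)$, and you give no mechanism to control these overlaps.

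The missing idea is structural information about the limit labeling of $T_\infty$:
\begin{itemize}
\item All occurrences of a fixed $g$ are pairwise comparable, so they lie on one ray.
\item There are at most $|A|+1$ of them. If $g$ occurs at $v_1,\dots,v_m$ in increasing depth, choose for each $i<m$ a child of $v_i$ off the ray. These children are pairwise incomparable and have products in $Ag$, so $m-1\le|A|$.
\item Hence each $g$ has a last occurrence $v(g)$.
\end{itemize}
Now start from $1$ and repeat the following step: pass from a value $g$ to $v(g)$, and assign to two of its children $u_0,u_1$ the values $p(u_0),p(u_1)$. Since $v(p(u_j))$ lies in the subtree below $u_j$, different branches live in disjoint subtrees of $T_\infty$. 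Also, no value can reappear below its own last occurrence. This gives an injective map from the binary tree to $G$ whose parent-child ratios lie in $A$.

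Once you have such an injective image $E'$, your own Hall-type criterion applies with no antichain at all. For $S\subseteq E'$, the $2|S|$ binary-tree children of the vertices carrying $S$ have distinct values in $AS\cap E'$. Equivalently, the parent map, with the root sent to itself, is a bounded map with at least two preimages everywhere. So $E'$ is paradoxical. The paper performs the same construction, then passes to an injective Lipschitz image of $F_2$ via grandchildren in order to invoke its imported characterization.
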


\begin{proof}
We use the characterization of supramenability in
\cite[Definition~3.1 and Proposition~3.4]{KMR}.
Let $F_2=\langle a,b\rangle$, and consider its Cayley graph, the
four-valent tree with edges labeled by $a^{\pm 1}, b^{\pm 1}$.
The characterization says 
$G$ is not supramenable exactly when there is an injective map $f:F_2\to G$ and a finite set $S\subseteq G$ such that $f(v)\cdot f(w)^{-1}\in S$ whenever $v,w$ are adjacent in the Cayley tree of $F_2$.
This is the Lipschitz condition of \cite{KMR};
$f$ is not required to be a group homomorphism.

The characterization in \cite{KMR} is close to our definition of $\crk(G)\leq 1$, a minor difference is in the degree of the trees ($4$ versus $3$) and additionally, in the collision rank definition only pairs of incomparable vertices are considered. The proof below mainly addresses this discrepancy. 

Suppose first that such a map $F_2\longrightarrow G$ exists. Restrict it to an infinite
rooted trivalent subtree, with root $o$. Multiplying all values of
$f$ on the right by $f(o)^{-1}$, we may assume that $f(o)=1$;
this preserves both injectivity and the edge label membership in $S$.
Choose a (symmetric) label set $A$ containing $S$, and label the
edge from a vertex $w$ to its child $v$ by $f(v)\cdot f(w)^{-1}$.
The product along the path from the root to $v$ equals 
$p(v)=f(v)$. Since $f$ is injective, every finite truncation gives
an $A$-labeling without a collision to $\{1\}$.
Thus $A\not\tarrow\{1\}$ and $\crk(G)>1$.

Conversely, suppose that $A\not\tarrow\{1\}$ for some label set $A$.
Let $T$ be the (abstract) infinite rooted trivalent tree obtained as the union
of the trees $T_h$. Since $A$ is finite, the space $A^{E(T)}$ of
all edge labelings of $T$ is compact in the product topology.
For each $h$, the labelings whose restriction to $T_h$ has no
collision to $\{1\}$ form a nonempty closed subset of this space.
These subsets are nested, so their intersection is nonempty.
We therefore obtain an $A$-labeling of $T$ for which
\[
p(v)=p(w)\quad\Longrightarrow\quad
 v,w\text{ are comparable}.
\]
(Alternatively, one can appeal to K\"onig's lemma to get this conclusion.) 
Thus all vertices carrying a given group element lie along a
single geodesic path starting at the root. We claim that each group
element occurs only finitely many times.
Suppose that $g$ occurs at vertices $v_1,\ldots,v_m$, listed in
increasing height. For each $i<m$, choose a child of $v_i$ that
does not lie on the path to $v_{i+1}$.
These $m-1$ children are pairwise incomparable, so the product group elements associated with them
are distinct. On the other hand, all their products belong to
the finite set $Ag$, therefore $m-1\leq |A|$.
In particular, every group element $g$ in the image of the map $p$ has a unique
last occurrence, denoted $v(g)$, furthest from the root.

We now make a new assignment of group elements to the vertices of an infinite
rooted binary tree, starting with $1$ at its root.
Whenever a vertex has been assigned the value $g$, consider
$v(g)$ in the original trivalent tree. (So for the first step of the construction, it is $v(1)$.) Choose two children
$u_0,u_1$ of $v(g)$, and assign
\begin{equation} \label{eq:g}
 g_0=p(u_0),\qquad g_1=p(u_1)
\end{equation}
to the two children of the binary-tree vertex.

We claim that this defines an injective map from the infinite binary tree to $G$, in other words that all group elements assigned to vertices in this construction are
distinct. For $g_j$ in (\ref{eq:g}), the last occurrence
$v(g_j)$ lies in the descendant subtree rooted at $u_j$, so
the two constructions continue in disjoint subtrees.
Values assigned in different branches cannot agree, because
they occur at incomparable vertices of $T$, and a value assigned
to an ancestor cannot reappear either.
The resulting map from the binary tree to $G$ is therefore
injective. Moreover, each parent-child pair satisfies
\[
g_jg^{-1}=p(u_j)p(v(g))^{-1}\in A.
\]
To obtain a map from $F_2$, a further modification is needed. Every vertex of the constructed binary tree has four grandchildren. We form a new, $4$-valent, tree using selected vertices at even depths: join its root to all four grandchildren.
At each subsequent selected vertex, join it to three of its four grandchildren.

Restricting to these selected vertices therefore gives an injective map
$ F_2\longrightarrow G$,
whose edge labels belong to the finite set $A^2$. It is Lipschitz in the sense of \cite{KMR}, so $G$ is not supramenable.
\end{proof}

We conclude with an example of infinite collision rank.
Call a subset of a group a \emph{free family of rank $m$} if its elements freely
generate a free subgroup of rank $m$.

\begin{lemma}\label{lem:descent}
Suppose $A\tarrow B$. If $A$ contains a free family of rank $3m$,
then $B$ contains a free family of rank $m$.
\end{lemma}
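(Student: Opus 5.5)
The plan is to apply the hypothesis $A\tarrow B$ to $m$ separate, carefully chosen $A$-labelings, one for each block of three free generators, and to extract from each one a nontrivial element of $B$ lying in the corresponding free factor.

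\emph{Setup.} Let $x_1,y_1,z_1,\ldots,x_m,y_m,z_m\in A$ be the free family of rank $3m$, generating a free group $F$. For each $j$ write $F_j=\langle x_j,y_j,z_j\rangle$. Then $F=F_1*\cdots*F_m$.

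\emph{Labelings.} Fix $h$ witnessing $A\tarrow B$. For each $j$, build the labeling $\lambda_j$ of $T_h$ exactly as in the proof of Lemma~\ref{lem:free-monoid}, using only the letters $x_j,y_j,z_j$:
\begin{itemize}
\item the three root edges get the three distinct letters;
\item at every other internal vertex, the two outgoing edges get two distinct letters from $\{x_j,y_j,z_j\}$.
\end{itemize}
By $A\tarrow B$ there are incomparable vertices $v,w$ with $b_j:=p(v)p(w)^{-1}\in B$, where $p$ is computed with respect to $\lambda_j$.

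\emph{Each $b_j$ is nontrivial in $F_j$.} Let $c$ be the last common vertex of the root paths to $v$ and $w$. Since $p(v)=a_k\cdots a_1$ is read with the root label rightmost, we can write $p(v)=U\,p(c)$ and $p(w)=W\,p(c)$. Here $U$ and $W$ are positive words in $x_j,y_j,z_j$.
\begin{itemize}
\item Both $U$ and $W$ are nonempty, because $v$ and $w$ are incomparable.
\item Their rightmost letters are the labels of the two distinct edges leaving $c$, so these letters differ. This is where three letters are needed: the root has three children, and they must carry pairwise distinct labels.
\end{itemize}
Hence $b_j=UW^{-1}$, and this word is freely reduced: the only possible cancellation is between the rightmost letter of $U$ and the inverse of the rightmost letter of $W$, and these letters are different. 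So $b_j$ is a nontrivial element of $F_j$.

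\emph{Conclusion.} Each $b_j\neq1$ lies in the free factor $F_j$. Since free groups are torsion-free, $\langle b_j\rangle\cong\Z$. By the standard subgroup theorem for free products, nontrivial subgroups taken from distinct free factors generate their free product. Therefore $\langle b_1,\ldots,b_m\rangle=\langle b_1\rangle*\cdots*\langle b_m\rangle$, which is free of rank $m$ with basis $b_1,\ldots,b_m$. In particular the $b_j$ are distinct, and $\{b_1,\ldots,b_m\}\subseteq B$ is a free family of rank $m$.

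The only delicate step is the reducedness check for $UW^{-1}$, and it hinges on the reversed product convention $p(v)=a_k\cdots a_1$: the shared root segment sits on the right and cancels, leaving differing letters adjacent at the junction. Everything else is routine.
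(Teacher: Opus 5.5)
Your proof is correct and follows the paper's argument. Both partition the free family into triples and label a witness tree with each triple so that sibling edges carry distinct letters. Each collision then gives $b_j\in B\cap(\langle x_j,y_j,z_j\rangle\setminus\{1\})$, and the free product structure shows that $b_1,\ldots,b_m$ freely generate a free group of rank $m$. Your explicit check that $UW^{-1}$ is freely reduced is a more detailed version of the paper's remark that, as in Lemma~\ref{lem:free-monoid}, distinct vertices have distinct products.
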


\begin{proof}
Consider a partition of the free family into triples $C_1,\ldots,C_m$. For each
$i$, label a witness tree using only $C_i$, with distinct labels on
the edges adjacent to each vertex. As in Lemma~\ref{lem:free-monoid},
distinct vertices have distinct products. Denoting by $\langle C_i\rangle $ the subgroup generated by $C_i$, the collision gives
\[
 g_i\in B\cap\bigl(\langle C_i\rangle\setminus\{1\}\bigr).
\]
The groups $\langle C_i\rangle$ are free factors in their free
product, and each $g_i$ has infinite order. This implies that $g_1,\ldots,g_m$ freely generate a free
group of rank $m$.
\end{proof}

\begin{proposition}\label{thm:free}
Every group containing a nonabelian free subgroup has infinite
collision rank.
\end{proposition}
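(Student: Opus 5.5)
By subgroup monotonicity, Proposition~\ref{prop:basic properties}(1), it suffices to show that a nonabelian free group $F$ has $\crk(F)=\infty$. Since $F_2$ contains free subgroups of every finite rank, we may work inside one copy of $F_2$ and assume $F$ contains free families of arbitrarily large finite rank.

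Suppose $\crk(F)=r<\infty$. The plan is to choose an initial label set $A_0$ containing a free family of rank $3^r$, for instance $3^r$ free generators of a free subgroup of $F_2$, together with their inverses and $1$. By definition of collision rank there is a chain
\[
 A_0\tarrow A_1\tarrow\cdots\tarrow A_r=\{1\}.
\]
Applying Lemma~\ref{lem:descent} $r$ times, $A_i$ contains a free family of rank $3^{r-i}$. In particular $A_r$ contains a free family of rank $3^0=1$, that is, an element of infinite order. This contradicts $A_r=\{1\}$, so $\crk(F)=\infty$.

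There is essentially no obstacle. The only point to check is that the rank bookkeeping works with exact multiples of $3$, and starting from rank $3^r$ makes each division exact. One should also note that a free family of rank $m\geq1$ contains a nontrivial element, which is what the final step uses. All the substantive work lies in Lemma~\ref{lem:descent}, which I take as given.
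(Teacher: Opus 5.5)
Your proof is correct and follows the paper's argument. You choose a label set containing a free family of rank $3^r$ and apply Lemma~\ref{lem:descent} along the chain, which forces an element of infinite order into $A_r=\{1\}$. Your preliminary reduction to the free subgroup via subgroup monotonicity is harmless but unnecessary, since the paper runs the same argument directly in $G$.
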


\begin{proof}
Suppose $\crk(G)=r<\infty$. A nonabelian free subgroup contains a
free family of any finite rank, in particular of rank $3^r$. Complete this family to a symmetric label set $A_0$,
and consider a chain
\[
 A_0\tarrow A_1\tarrow\cdots\tarrow A_r=\{1\}.
\]
An inductive application of Lemma \ref{lem:descent} implies that $A_i$ contains
a free family of rank $3^{r-i}$. In particular, $\{1\}$ contains a
free family of rank one, a contradiction.
\end{proof}

\end{document}